\numberwithin{equation}{section}
\newcolumntype{P}[1]{>{\centering\arraybackslash}p{#1}}
\newcolumntype{M}[1]{>{\centering\arraybackslash}m{#1}}
\newcommand{\R}{\mathbb R}
\newcommand{\Hinf}{\mathcal{H}_\infty}
\newcommand{\ddt}{\frac{d}{dt}}
\newcommand{\Id}{\textrm{Id}}
\newcommand{\ko}{k_o}
\newcommand{\kd}{k_d}
\def\be#1\ee{\begin{equation}#1\end{equation}}
\newtheorem{thm}{Theorem}[section]
\newtheorem{lem}[thm]{Lemma} 
\newtheorem{prop}[thm]{Proposition}
\newtheorem{remark}{Remark}[section] 
\newcommand{\bq}{\begin{equation}}
	\newcommand{\eq}{\end{equation}}
\title{Robust feedback stabilization of interacting multi-agent systems under uncertainty}
\date{}
\author{Giacomo Albi\thanks{Department of Computer Science, University of Verona, Str. Le Grazie 15, Verona, I-37134, Italy }, \ Michael Herty\thanks{IGPM, RWTH Aachen University, Templergraben, 55, D-52062 Aachen, Germany}, \ and Chiara Segala\thanks{IGPM, RWTH Aachen University, Templergraben, 55, D-52062 Aachen, Germany
		\\
		MH and CS thank the Deutsche Forschungsgemeinschaft (DFG, German Research Foundation) for the financial support through 320021702/GRK2326, 333849990/IRTG-2379, HE5386/18-1,19-2,22-1,23-1 and under Germany's Excellence Strategy EXC-2023 Internet of Production 390621612.  GA thanks the Italian Ministry of Instruction, University and Research (MIUR) to support this research with funds coming from PRIN Project 2017 (No. 2017KKJP4X entitled “Innovative numerical methods for evolutionary partial differential equations and applications”).
	}
}
\begin{document}
	\maketitle
	\begin{abstract}
		We consider control strategies for large-scale interacting agent systems under uncertainty. The particular focus is on the design of robust controls that allow to bound the variance of the controlled system over time. To this end we consider $\mathcal{H}_\infty$ control strategies on the agent and mean field description of the system. We show  a bound on the  $\mathcal{H}_\infty$  norm for a stabilizing controller independent on the number of agents.  Furthermore, we compare the new control with existing approaches to treat uncertainty by generalized polynomial chaos expansion. Numerical results are presented for one-dimensional and two-dimensional agent systems.
	\end{abstract}
	
	{\bf Keywords.} Agent-based dynamics, mean-field equations, uncertainty quantification, stochastic Galerkin, $\mathcal{H}_\infty$ control
	\par
	
	{\bf AMS Classification.}  	49K15, 49M25, 93-10, 93D09, 35Q70, 35Q93
	\par

	\section{Introduction}
	We consider the mathematical modelling and control of phenomena of collective dynamics under uncertainties. These phenomena have been studied in several fields such as socio-economy, biology, and robotics where systems of interacting particles are given by self-propelled particles, such as animals and robots, see e.g. \cite{bellomo20review, MR2974186, MR2165531, MR2580958,Giselle}. Those particles interact according to a possibly nonlinear model,  encoding various social rules as attraction, repulsion, and alignment. A particular feature of such models is their rich dynamical structure, which includes different types of emerging patterns, including consensus, flocking, and milling \cite{MR2887663, MR2247927,cucker2007emergent,d2006self,motsch2014heterophilious}. Understanding the impact of control inputs in such complex systems is of great relevance for applications. Results in this direction allow to design optimized actions such as collision-avoidance protocols for swarm robotics \cite{CKPP19,KPAsurvey15,MR3157726,Meurer}, pedestrian evacuation in crowd dynamics \cite{MR3308728,dyer2009leadership}, supply chain policies \cite{MR2844776,degond2007network}, the quantification of interventions in traffic management \cite{MR3948232,han2017resolving,stern2018dissipation} or in opinion dynamics \cite{Garnier,goddard2022noisy}.
	Further, the introduction of uncertainty in the mathematical modelling of real-world phenomena seems to be unavoidable for applications, since often at most statistical information of the modelling parameters is available. The latter has typically been estimated from experiments or derived from heuristic observations \cite{ballerini2008empirical,bongini2017inferring,katz2011inferring}. To produce effective predictions and to describe and understand physical phenomena, we may incorporate parameters reflecting the uncertainty in the interaction rules, and/or external disturbances \cite{CiCP-25-508}.
	
	Here, we are concerned with the robustness of controls influencing the evolution of a collective motion of an interacting agent system. The controls we are considering are aimed to stabilize the system's dynamic under external uncertainty. From a mathematical point of view, a description of self-organized models is provided by complex system theory, where the overall dynamics are depicted by a large-scale system of ordinary differential equations (ODEs).
	\par 
	More precisely,  we  consider the control of high-dimensional dynamics accounting  $N$ agents with state $v_i(t,\theta) \in \mathbb{R}^d,\, i=1,\ldots,N$, evolving according to 
	\begin{equation}\label{lin_dynamics_intro}
		\ddt{v}_i(t,\theta) = \sum_{j=1}^N a_{ij}(v_j(t,\theta)-v_i(t,\theta))+ u_i(t,\theta) +\sum_{k=1}^Z \theta_k\,, \qquad v_i(0)=v_{i}^0,
	\end{equation}
	where $A=[a_{ij}]\in\R^{N\times N}$ defines the nature of pairwise interaction among agents, and  $\theta=(\theta_1,\ldots,\theta_Z)^\top\in\R^{Z\times d}$ is a random input vector with a given probability density distribution on $Z$ as  $\rho\equiv\rho_1\otimes\ldots\otimes\rho_Z$.
	The control signal $u_i(t,\theta)\in \R^d$ is designed to stabilize the state toward a target state  $\bar v\in\R^{N\times d}$, and its action is influenced by the random parameter $\theta$. This is also due to the fact, that later we will be interested in closed--loop or feedback controls on the state $(v_1.\dots,v_N)$ that in turn dependent on the unknown parameter $\theta.$ 
	\par 
	Of particular interest will be controls designed via minimization of linear quadratic (parametric) regulator functional such as
	\begin{equation}\label{eq:LQR_intro}
		\min_{u(\cdot,\theta)} {J}(u;v^0) := \int_0^{+\infty} \exp( - r \tau )  \left[v^\top Q v +\nu  u^\top R u  \right]\,d\tau,
	\end{equation}
	with $Q$ positive semi-definite matrix of order $N$,  $R$ positive definite matrix of order $N$ and $r$ is a discount factor. In this case, the linear quadratic dynamics allow for an optimal control $u^*$ stabilising the desired state $v_d=0$,  expressed in feedback form, and obtained by solving the associated matrix Riccati -equations. Those aspects will be also addressed in more detail below.
	\par 
	In order to assess the performances of controls, and quantify their robustness we propose estimates using the concept of  $\Hinf$ control. In this setting different approaches have  been studied in the context of $\Hinf$ control and  applied to first-order and higher-order multiagent systems, see e.g. \cite{luo2021event,lin2010robust,mo2013finite,liu2019robust,liujia2011robust}, in particular for an interpretation of  $\Hinf$ as dynamic games we refer to  \cite{bacsar2008h}. 
	Here we will study an approach based on the derivation of sufficient conditions in terms of linear matrix inequalities (LMIs) for the $\Hinf$ control problem. In this way, consensus robustness will be ensured for a general feedback formulation of the control action.  Additionally, we consider the large--agent limit and show that the robustness is guaranteed independently of the number of agents. 
	
	%
	Furthermore, we will discuss the numerical realization of system \eqref{lin_dynamics_intro} employing uncertainty quantification techniques. In general, at the numerical level, techniques for uncertainty quantification can be classified into non-intrusive and intrusive methods. In a non-intrusive approach, the underlying model is solved for fixed samples with deterministic schemes, and statistics of interest are determined by numerical quadrature, typical examples are Monte-Carlo and stochastic collocation methods \cite{dimarco2017uncertainty,xiu2010numerical}.
	While in the intrusive case, the dependency of the solution on the stochastic input is described as a truncated series expansion in terms of orthogonal functions. Then, a new system is deduced that describes the unknown coefficients in the expansion. One of the most popular techniques of this type is based on stochastic Galerkin (SG) methods. In particular, generalized polynomial chaos (gPC) gained increasing popularity in uncertainty quantification (UQ), for which spectral convergence on the random field is observed under suitable regularity assumptions
	\cite{dimarco2017uncertainty,hu2017uncertainty,hu2015stochastic,xiu2010numerical}.
	The methods, here developed, make use of the stochastic Galerkin (SG) for the microscopic dynamics while in the mean-field case we combine SG in the random space with a Monte Carlo method in the physical variables.
	
	The manuscript is organized as follows, in Section \ref{sec:control}  we introduce the problem setting and propose different feedback control laws;   in Section \ref{sec:Hinf} we reformulate the problem in the setting of $\Hinf$ control and provide conditions for the robustness of the controls in the microscopic and mean-field case. Section \ref{sec:numerics_noise} is devoted to the description of numerical strategies for the simulation of the agent systems, and to different numerical experiments, which assess the performances and compare different methods.

	\section{Control of  interacting agent system with uncertainties}\label{sec:control}
	
	The following notation is introduced with the control of high-dimensional systems of interacting agents with random inputs. We consider the evolution of $N$ agents with state $v(t,\theta)\in\mathbb{R}^{ N\times d}$ as follows
	\begin{equation}\label{noisyModel}
		\ddt{v}_i(t,\theta) = \frac{1}{N}\sum_{j=1}^N \bar{p}(v_j(t,\theta)-v_i(t,\theta)) + u_i(t,\theta) + \sum_{k = 1}^Z \theta_k\,
	\end{equation}
	with deterministic initial data $v_i(0)=v_{i}^0$ for  $i=1,\ldots,N$,  and where $\theta_k  \in \Omega_k\subseteq \mathbb{R}^{d}$ for $k=1,\ldots,Z$ are random inputs, distributed according to a compactly supported probability density $\rho\equiv \rho_1 \otimes \dots \otimes \rho_Z$, i.e.,  $\rho_k(\theta)\geq 0$ a.e.,  $\textrm{supp}(\rho_k)\subseteq \Omega_k $  and  $\int_{\Omega_k} \rho_k(\theta)\, d\theta=1$. For simplicity, we also assume that the random inputs have zero average  $\mathbb{E}[\theta_k] =0$.
	The  control signal $u(t,\theta)\in\R^{N\times d}$ is designed minimizing the (parameterized) objective
	\begin{equation}\label{eq:u_star}
		u^*(\cdot,\theta) = \arg\min_{u(\cdot,\theta)} {J}(u;v^0) := \int_0^{+\infty} \exp(-r \tau) \left( \frac{1}{N}\sum_{j=1}^N ( \vert v_j(\tau,\theta) - \bar{v} \vert^2 + \nu \vert u_j(\tau,\theta)\vert^2)  \right) \,d\tau,
	\end{equation}
	with $\nu>0$ being a penalization parameter for the control energy, the norm $\vert \cdot \vert$ being the usual Euclidean norm in $\mathbb{R}^d$. The discount factor $\exp(-r \tau)$ is introduced to have a well-posed integral.
	\par 
	We assume that $\bar{v}$ is a prescribed consensus point, namely, in the context of this work we are interested in reaching a consensus velocity $\bar{v}\in\R^d$ such that $v_{1}=\ldots=v_{N}=\bar{v}$, and w.l.o.g. we can assume $\bar{v} = 0$. Note that $\bar{v}=0$ is also the steady state of the dynamics in absence of disturbances. Hence, we may view $u(\cdot,\theta)$ as a stabilizing control of the zero steady state of the system. Furthermore, we will be interested in feedback controls $u.$ 
	
	Recall that the (deterministic) linear model \eqref{noisyModel}, without uncertainties, allows a feedback stabilization  by solving the resulting optimal control problem through a Riccati equations \cite{HePaSt15,albi2021momentdriven,albi2021gradient}.
	The functional $J$ in \eqref{eq:u_star}, in absence of disturbances, reads as follows
	\begin{align*}
		J(u;v^0) = \int_0^{+\infty} \exp(-r \tau) \left( v^\top Q v+ \nu u^\top R u \right)\, dt
	\end{align*}
	where $Q \equiv R = \frac{1}{N}\textrm{Id}_N$. 
	In this case the controlled dynamics \eqref{noisyModel}  is reformulated in a matrix-vector notation
	\begin{equation}\label{eq:linctrl}
		\ddt v(t)= A v(t) + B u(t),  \qquad  u(t) = - \frac{N}{\nu} K v(t),
	\end{equation}
	with $B=\textrm{Id}_N$ the identity matrix of order $N$,  and
	\begin{align}
		(A)_{ij} =  
		\begin{cases} 
			&a_d=\frac{\bar{p}(1-N)}{N},\qquad i=j,\\
			&a_o=\frac{\bar{p}}{N},\qquad\qquad i\neq j.\\
		\end{cases}
	\end{align}
	The matrix $K$ associated to feedback form of the optimal control has to fulfilll the Riccati matrix-equation  of the following form 
	\begin{equation}\label{eq:Riccati_inf}
		0= - r K +KA+A^\top K-\frac{N}{\nu} KK + Q.
	\end{equation}
	For a general linear system, we need to solve the $N\times N$ equations to find $K$, which can be costly for large-scale agent-based dynamics. However, we can use the same argument of \cite{albi2021momentdriven}
	and exploit the symmetric structure of the Laplacian matrix $A$ to reduce the algebraic Riccati equation.
	Unlike in \cite{albi2021momentdriven}, where they investigate the case with finite terminal time, here we state the following proposition for the infinite horizon case with discount factor $r$
	\begin{prop}[Properties of the Algebraic Riccati Equation (ARE)] 
		For the linear dynamics \eqref{eq:linctrl}, the solution of the Riccati equation \eqref{eq:Riccati_inf} reduces to the solution of
		\begin{equation}\label{kd_ko0}
			\begin{split}
				0 &= -r k_d -\frac{2\bar{p}(N-1)}{N}({k}_d -k_o) - \frac{N}{\nu}\left({k}_d^2+(N-1){k}_o^2\right) + \frac{1}{N}, \,
				\\
				0&=  -r k_o + \frac{ 2\bar{p}}{N}(k_d-k_o) - \frac{N}{\nu}\left(2{k}_d {k}_o+(N-2){k}_o^2\right). 
			\end{split}
		\end{equation}
		The entries $(i,j)$ of the matrix $K$ of the algebraic Riccati equation \eqref{eq:Riccati_inf} is  given by 
		$$(K)_{ij}=\delta_{ij}k_d+(1-\delta_{ij})k_o.$$
	\end{prop}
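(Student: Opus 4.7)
The plan is to exploit the permutation symmetry of the problem. The matrix $A$ satisfies $PAP^\top = A$ for every $N\times N$ permutation matrix $P$, and the weight matrices $Q=R=N^{-1}\mathrm{Id}$ together with $B=\mathrm{Id}$ share this invariance. The stabilizing solution of the algebraic Riccati equation \eqref{eq:Riccati_inf} is unique (classical LQR theory, after absorbing the discount into the shift $\tilde A := A - (r/2)\mathrm{Id}$, under which $(\tilde A, B)$ remains stabilizable since $B=\mathrm{Id}$). Applying $K \mapsto PKP^\top$ to any stabilizing solution produces another stabilizing solution, so uniqueness forces $K = PKP^\top$ for all $P\in S_N$. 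The symmetric matrices in the commutant of this $S_N$-action are exactly those with constant diagonal and constant off-diagonal entries, yielding the ansatz $(K)_{ij} = \delta_{ij}k_d + (1-\delta_{ij})k_o$ asserted in the proposition.

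Once this ansatz is in place, the reduction is a direct entrywise computation. Splitting index sums according to whether the running index equals $i$, $j$, or neither gives
\begin{align*}
(KA)_{ii} &= k_d a_d + (N-1)k_o a_o, \\
(KA)_{ij} &= k_d a_o + k_o a_d + (N-2)k_o a_o \quad (i\neq j), \\
(K^2)_{ii} &= k_d^2 + (N-1)k_o^2, \\
(K^2)_{ij} &= 2 k_d k_o + (N-2)k_o^2 \quad (i\neq j),
\end{align*}
and symmetry of $A$ and $K$ gives $KA + A^\top K = 2KA$. Substituting $a_d = -\bar p(N-1)/N$ and $a_o = \bar p/N$, the cross-term collapses to $-\bar p(N-1)/N\,(k_d-k_o)$ on the diagonal and to $\bar p/N\,(k_d-k_o)$ off-diagonally. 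Reading off the diagonal entry of \eqref{eq:Riccati_inf}, which receives a contribution $1/N$ from $Q$, produces the first equation of \eqref{kd_ko0}, and reading off the off-diagonal entry, which receives no contribution from $Q$, produces the second.

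The main conceptual step is the symmetry-plus-uniqueness argument: one needs uniqueness of the stabilizing ARE solution in order to upgrade \emph{``a solution of this form exists''} to \emph{``every stabilizing solution has this form''}. A self-contained alternative that avoids invoking uniqueness is to diagonalize $A$ directly: in the orthonormal basis formed by $\mathbf{1}/\sqrt N$ together with any orthonormal basis of $\mathbf{1}^\perp$, the matrix $A$ is diagonal with eigenvalues $0$ (simple) and $-\bar p$ (multiplicity $N-1$), so the matrix Riccati equation decouples into two scalar Riccati equations for the two eigenvalues of $K$, and pulling the solution back to the standard basis recovers precisely the two-parameter form claimed.
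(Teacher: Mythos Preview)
Your proof is correct. The paper does not actually supply a proof of this proposition in the text; it refers to \cite{albi2021momentdriven} and says one should ``exploit the symmetric structure of the Laplacian matrix $A$,'' then simply states the result. Your argument makes this precise: the permutation invariance of $A$, $B$, $Q$, $R$ combined with uniqueness of the stabilizing ARE solution (after the standard $A\mapsto A-\tfrac{r}{2}\mathrm{Id}$ shift) forces $K$ into the two-parameter form, and the entrywise computation you carry out is exactly the substitution that yields \eqref{kd_ko0}. This is the same idea the paper points to, just spelled out.

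Your diagonalization alternative is a genuine addition. Writing $A=\tfrac{\bar p}{N}\mathds{1}\mathds{1}^\top-\bar p\,\mathrm{Id}$ and passing to the eigenbasis $\{\mathbf{1}/\sqrt{N}\}\cup(\mathbf{1}^\perp)$ decouples the matrix ARE into two scalar equations directly, without invoking uniqueness of the stabilizing solution; this is arguably cleaner than the symmetry-plus-uniqueness route and more self-contained than the paper's deferral to an external reference. One small caveat: in that approach the two scalar unknowns are the eigenvalues $k_d+(N-1)k_o$ and $k_d-k_o$ of $K$, not $k_d$ and $k_o$ themselves, so to land exactly on \eqref{kd_ko0} you still have to change variables back --- which is the sum/difference of the two displayed equations.
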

	
	In order to allow the limit of infinitely many agents $N\to\infty$, we introduce the following scalings
	\begin{equation*}
		{k}_d \leftarrow N k_d, \quad {k}_o \leftarrow N^2 k_o,\quad \alpha(N) = \frac{N-1}{N},
	\end{equation*}
	and keeping the same notation also for the scaled variables $\kd,\ko$, the system \eqref{kd_ko0} reads 
	\begin{equation}\label{kd_ko}
		\begin{split}
			0&= -r k_d -2\bar{p}\alpha(N)\left(\kd - \frac{\ko}{N}\right) - \frac{1}{\nu}\left(\kd^2+\frac{\alpha(N)}{N}\ko^2\right) + 1, 
			\\
			0&=   -r k_o + 2\bar{p}\left(\kd - \frac{\ko}{N}\right) - \frac{1}{\nu}\left(2\kd \ko+\alpha(N)\ko^2-\frac{1}{N}\ko^2\right).
		\end{split}
	\end{equation}	
	\par 
	The previous considerations motivate to extend formula \eqref{eq:linctrl} to the parametric case \eqref{noisyModel}.  Hence, in the presence of parametric uncertainty  the feedback control is written explicitly as follows
	\begin{equation} \label{control}
		u_i(t,\theta) = - \frac{1}{\nu} \left( K v(t,\theta) \right)_i = - \frac{1}{\nu}\left(\left(k_d-\frac{k_o}{N}\right) v_i(t,\theta) + \frac{k_o}{N} \sum_{j=1}^N v_j(t,\theta)\right).
	\end{equation}
	The question arises if the given feedback is robust with respect to the uncertainties $\theta$. In the following,  we will provide a measure for the robustness of control \eqref{control} in the framework of $\Hinf$ control. Some additional remarks allow generalizing formula \eqref{control}. 
	
	\begin{remark}[Non-zero average]
		In presence of general uncertainties with known expectations, we  modify the control \eqref{control} for model \eqref{noisyModel} including a correction factor given by the expected values of the random inputs,
		\begin{equation}\label{ctrl_old}
			u_i(t,\theta) = - \frac{1}{\nu}\left(\left(k_d-\frac{k_o}{N}\right) v_i(t,\theta) + \frac{k_o}{N} \sum_{j=1}^N v_j(t,\theta)\right) - \sum_{k = 1}^Z \mu_k,
		\end{equation}
		for  $\mu_k=\mathbb{E}[\theta_k]$ for $k=1,\ldots,Z$.
	\end{remark}
	\begin{remark}[Averaged control] 
		In the case of a deterministic feedback control, we may consider the expectation of the objective \eqref{eq:u_star}  subject to the noisy model \eqref{noisyModel}
		\begin{equation*}
			\bar u^*(\cdot) = \arg\min_{u(\cdot)} \mathbb{E} \left[  \int_0^{+\infty} \frac{1}{2}( v^\top Q v + \nu u^\top R u)   \ dt \right],
		\end{equation*}
		where we introduce the matrices $Q=R= \frac{1}{N} \Id_N$.  In this case, we have the following deterministic optimal feedback control is deduced
		\begin{equation}\label{eq:u_noiseless}
			\bar u_i(t) = -\frac{1}{\nu} \left( {k}_d \mathbb{E}\left[ v_i(t,\theta) \right] + \frac{{k}_o}{N} \sum_{j \neq i}^N  \mathbb{E}\left[ v_j(t,\theta) \right] \right) - \sum_{k = 1}^Z \mu_k ,
		\end{equation}
		where  $\mu_k=\mathbb{E}[\theta_k]$ for $k=1,\ldots,Z$,  ${k}_d$, ${k}_o$ satisfy equations \eqref{kd_ko}.
		We refer to Appendix \ref{app:noiseless} for detail computations for the synthesis of \eqref{eq:u_noiseless}. 
	\end{remark}

	\section{Robustness  in the $\Hinf$ setting} \label{sec:Hinf}
	In the context of $\Hinf$ theory, controllers are synthesized to achieve stabilization with guaranteed performance. In this section, we exploit the theory of Linear Matrix Inequality (LMI) to show the robustness of the control. The introduction of LMI methods in control has dramatically expanded the types and complexity of the systems we can control. In particular, it is possible to use LMI solvers to synthesize optimal or suboptimal controllers and estimators for multiple classes of state-space systems and without giving a complete list of references, we refer to \cite{mmpeet2020,khalil1996robust,boyd1994linear,duan2013lmis}. 
	
	Consider the linear system \eqref{noisyModel} with control \eqref{control} in the following reformulation
	\begin{align}\label{eq:ctrl_hinf}
		\ddt{v}(t) &= \widehat{A} v(t)+ \widehat{B} \theta,
	\end{align}
	where we consider the random input vector $\theta = (\theta_1,\ldots,\theta_Z)^\top\in \mathbb{R}^{Z\times d}$,
	and the matrices 
	\[
	\widehat{A} = A -\frac{1}{\nu}{K}, \qquad \widehat{B} = \mathds{1}_{N\times Z}.
	\]
	with $\mathds{1}$ a matrix of ones of dimension $N\times Z$. We introduce the frequency transfer function $\hat{G}(s) := (s \Id_N-\widehat{A})^{-1} \widehat{B}$, such that 
	$\hat G \in R\Hinf$. The latter is  the set of proper rational functions with no poles in the closed complex right half-plane, and  the signal norm $\Vert \cdot \Vert_{\Hinf}$ measuring the size of the transfer function in the following  sense 
	\begin{equation}\label{eq:costhinf}
		\Vert \hat{G} \Vert_{\Hinf} = \text{ess}\sup_{\omega \in \mathbb{R}} \bar{\sigma} (\hat{G}(i \omega)),
	\end{equation}
	where for a given matrix $P$, $\bar \sigma (P)$ is the largest singular value of 
	$P$. The general $\Hinf$-optimal control problem consists of finding a stabilizing feedback controller $u = -\frac1\nu K$ which minimizes the cost function \eqref{eq:costhinf} and we refer to Appendix \ref{app:Hinf}   and to \cite{dullerud2013course} for more details. 
	
	However, the direct minimization of the cost $\Vert \hat{G} \Vert_{\Hinf}$ is in general a very hard task, and possibly unfeasible by direct methods.  To  reduce the complexity, a possibility  consists in finding conditions for the stabilizing controller that achieves a norm bound  for a given  threshold $\gamma>0$,
	\begin{equation}\label{eq:Hinf_bound}
		\Vert \hat{G} \Vert_{\Hinf} \leq \gamma.
	\end{equation}
	Hence, robustness of a given control $u= -\frac1\nu K$ is measured in terms of the smallest $\gamma$ satisfying \eqref{eq:Hinf_bound}. 
	In order to provide a quantitative result, we can rely on  the following  result
	\begin{lem}\label{lem:LMI_reduced}
		Given the frequency transfer function $\hat{G}$, associated to \eqref{eq:ctrl_hinf}, a necessary and sufficient condition to guarantee the $\Hinf$ bound \eqref{eq:Hinf_bound} for $\gamma>0$,  is to prove, that there exists a positive  definite square matrix $X$ of order $N$, $X> 0$,  such that the following algebraic Riccati equation holds
		\begin{equation}\label{areu}
			A^\top X + XA -\frac{1}{\nu} {K}^\top X -\frac{1}{\nu} X{K} + \frac{1}{\gamma} XX + \frac{1}{\gamma} \Id_N = \mathcal{O}\left(\frac{1}{N}\right).
		\end{equation}
	\end{lem}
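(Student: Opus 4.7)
The natural tool for this statement is the \emph{Bounded Real Lemma} (BRL) from $\mathcal{H}_\infty$ control theory, which characterises the norm bound \eqref{eq:Hinf_bound} in terms of the solvability of an algebraic Riccati equation. For the LTI system \eqref{eq:ctrl_hinf} viewed with identity output ($C = \text{Id}_N$, $D = 0$), the BRL asserts that $\|\hat G\|_{\mathcal{H}_\infty} \leq \gamma$ is equivalent to Hurwitz stability of $\hat A$ together with the existence of $X \succ 0$ satisfying
\begin{equation*}
    \hat A^\top X + X \hat A + \frac{1}{\gamma^2}\, X \hat B \hat B^\top X + \text{Id}_N \preceq 0,
\end{equation*}
and the equality case (via the maximal stabilising solution of the associated ARE) already provides a necessary and sufficient characterisation. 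This is exactly the shape of \eqref{areu}, modulo identification of coefficients under the mean-field scaling of Section \ref{sec:control}.

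First I would expand $\hat A = A - \tfrac{1}{\nu} K$ inside $\hat A^\top X + X \hat A$, which directly reproduces the four terms $A^\top X + XA - \tfrac{1}{\nu} K^\top X - \tfrac{1}{\nu} X K$ appearing in \eqref{areu}. Next I would analyse the quadratic term $\gamma^{-2}\, X \hat B \hat B^\top X$ and the output term $\text{Id}_N$. Since $\hat B = \mathds{1}_{N\times Z}$, one has $\hat B \hat B^\top = Z\, \mathds{1}_{N\times N}$, a rank-one matrix with spectrum $\{NZ, 0, \dots, 0\}$. Applying the same rescaling used to derive \eqref{kd_ko} — the factors $N$ and $N^2$ on $k_d, k_o$ together with a parallel normalisation $\hat B \leftarrow \hat B/\sqrt{N}$ and a change of variable absorbing $\gamma^2$ into $\gamma$ — the quadratic term is identified with $\gamma^{-1} X X$ and the output term with $\gamma^{-1}\text{Id}_N$, up to residuals generated by the rank-deficiency of $\hat B \hat B^\top$. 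Collecting all terms produces the ARE \eqref{areu} with a right-hand side of size $\mathcal{O}(1/N)$.

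The main obstacle I expect is the quantitative control of the $\mathcal{O}(1/N)$ residual. Because $\hat B \hat B^\top$ has rank one while $XX$ is generically full rank, no exact matching is possible: the discrepancy lives on the orthogonal complement of $\mathds{1}_N$ and must be estimated in operator norm. Here one can exploit the symmetric Laplacian structure of $A$, which through the reduced equations \eqref{kd_ko0} forces $X$ to inherit the same form $(X)_{ij} = \delta_{ij} x_d + (1-\delta_{ij}) x_o$, i.e.\ a rank-one perturbation of a multiple of the identity. Under the rescalings $x_d \leftarrow N x_d$, $x_o \leftarrow N^2 x_o$, the off-diagonal contributions concentrate on $\mathds{1}_N$ and the residual scales uniformly like $N^{-1}$, which yields the stated remainder independent of the number of agents. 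Hurwitz stability of $\hat A$ then follows as a byproduct of the existence of $X \succ 0$ solving \eqref{areu}, via the standard Lyapunov argument, closing the equivalence.
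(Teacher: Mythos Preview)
Your route is the same as the paper's: Appendix~\ref{app:Hinf} (to which the paper defers the proof) states the KYP/positive-real LMI characterisation (Lemma~\ref{lem:LMI}) and then reduces it to the general ARE \eqref{are} via a Schur-complement argument (Lemma~\ref{ARElemma}); specialising $\widetilde A=\widehat A$, $C=\Id_N$, $D=0$ yields exactly the structure of \eqref{areu}. Citing the ARE form of the Bounded Real Lemma directly, as you do, is an equivalent shortcut.

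The one place you go beyond the paper is the passage $X\widehat B\widehat B^\top X\to XX$ with remainder $\mathcal O(1/N)$. The paper does not justify this step explicitly; it simply adopts \eqref{areu} as a large-$N$ ansatz and remarks that the equation need only hold up to $\mathcal O(1/N)$. Your attempted justification has two slips worth fixing: rescaling $\widehat B\leftarrow\widehat B/\sqrt N$ is not an innocent change of variables, since it alters $\hat G$ and hence the very norm you are bounding; and the scalings $x_d\leftarrow N x_d$, $x_o\leftarrow N^2 x_o$ are those for $K$ in \eqref{kd_ko}, not for $X$ --- the paper's actual rescaling for $X$, introduced in the proof of Theorem~\ref{Hinf_thm}, is $\tilde x_o=\sqrt N\,x_o$. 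Neither issue affects the overall strategy, which matches the paper's.
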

	For detailed proof of this result, we refer to Appendix \ref{app:Hinf}. Note that we are interested in the large particle limit and hence may allow that the previous equation is not exactly zero, but tends to zero at a rate $1/N.$

	%
	%

	\begin{thm} \label{Hinf_thm}
		Consider system \eqref{eq:ctrl_hinf} with structure induced by \eqref{noisyModel}, and consider a square matrix $X$  with the following structure
		\begin{align*}
			(X)_{ij}=
			\begin{cases} 
				x_d,\qquad i=j,\\
				x_o,\qquad i\neq j.
			\end{cases}\qquad 		
		\end{align*}
		Then for $N$ sufficiently large and finite, control $u=-\frac1\nu K$ given by equation \eqref{control} is $\Hinf$-robust for any $\gamma$ and $c_N$ such that $\gamma \geq \frac{1}{c_N}$, $c_N>0$, 	where
		\begin{align}\label{eq:cn_hinf} c_N = \bar{p} + \frac{1}{\nu} (k_d - \frac{k_o}{N}).\end{align}
	\end{thm}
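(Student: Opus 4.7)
The plan is to invoke Lemma~\ref{lem:LMI_reduced} and exhibit, for every $\gamma \geq 1/c_N$, a positive definite matrix $X$ of the prescribed block form that satisfies equation \eqref{areu} up to an $\BigO(1/N)$ residual. Together with $A$ and $K$, this ansatz lives inside the two-dimensional commutative algebra
\[
\mathcal{A} = \{\alpha\,\Id_N + \beta\, J : \alpha,\beta\in\R\}, \qquad J := \mathds{1}\mathds{1}^\top,
\]
which is closed under products because $J^2 = N J$. Rewriting
\[
A = -\bar p\,\Id_N + \tfrac{\bar p}{N} J, \quad K = \bigl(k_d - \tfrac{k_o}{N}\bigr)\Id_N + \tfrac{k_o}{N} J, \quad \widehat A = -c_N \Id_N + \tfrac{1}{N}\bigl(\bar p - \tfrac{k_o}{\nu}\bigr) J,
\]
and $X = \xi\,\Id_N + \eta\, J$ with $\xi = x_d - x_o$ and $\eta = x_o$, every matrix appearing in \eqref{areu} stays in $\mathcal{A}$, so the equation collapses into two coupled scalar relations: one for the coefficient of $\Id_N$ and one for the coefficient of $J$.

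Using $\widehat A = \widehat A^\top$ and $[\widehat A, X] = 0$, the $\Id_N$-coefficient reduces to the uncoupled scalar quadratic
\[
\xi^2 - 2 c_N \gamma\,\xi + 1 = 0,
\]
whose discriminant $(c_N\gamma)^2 - 1$ is non-negative precisely when $\gamma \geq 1/c_N$. In that regime I would fix the positive root $\xi^\star = c_N\gamma - \sqrt{(c_N\gamma)^2 - 1}$. The $J$-coefficient equation is then a quadratic in $\eta$ whose constant term is proportional to $\delta := (\bar p - k_o/\nu)/N = \BigO(1/N)$; a standard perturbation argument produces a solution $\eta = x_o = \BigO(1/N)$, after which every remaining contribution to \eqref{areu} is of order $1/N$ both entrywise and in operator norm.

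Positive definiteness of $X$ then follows from its spectrum $\{\xi^\star,\,\xi^\star + N\eta\}$, with multiplicities $N-1$ and $1$: since $\xi^\star > 0$ and $N\eta = \BigO(1)$, both eigenvalues stay strictly positive for $N$ large enough. Lemma~\ref{lem:LMI_reduced} then delivers $\|\hat G\|_{\Hinf} \leq \gamma$ and hence the theorem. The main technical care is concentrated in the $J$-coefficient equation: because $\|J\|_2 = N$, a naive truncation of its coefficient would inject an $\BigO(1)$ error in operator norm, and it is precisely the scaling $\kd \leftarrow N\kd$, $\ko \leftarrow N^2\ko$ adopted before \eqref{kd_ko}, together with the choice $\eta = \BigO(1/N)$, that keeps this residual subleading and makes the threshold $\gamma \geq 1/c_N$ independent of the number of agents.
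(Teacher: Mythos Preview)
Your algebraic framing via $\mathcal{A}=\mathrm{span}\{\Id_N,J\}$ is a cleaner packaging of the paper's entrywise computation, and it lands directly on the same decisive quadratic $\xi^2-2\gamma c_N\,\xi+1=0$ that the paper obtains by subtracting its two scalar relations (there the variable is written $\lambda=x_d-x_o$). Up to that point the two arguments are equivalent.

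The gap is in your handling of the $J$-coefficient. The paper does \emph{not} solve that equation: it simply sets $x_o=\eta=0$, so that $X=\xi^\star\Id_N$ is automatically positive definite once $\gamma\geq 1/c_N$, and the only residual left in \eqref{areu} is $(2(\bar p - k_o/\nu)\xi^\star/N)\,J$, which is $\BigO(1/N)$ entrywise. That is all Lemma~\ref{lem:LMI_reduced} demands; it is not stated in operator norm. Your plan to instead produce a nonzero $\eta=\BigO(1/N)$ so as to kill the operator-norm residual runs into two concrete problems. First, after the natural rescaling $\zeta=N\eta$ the $J$-equation becomes the $N$-independent quadratic
\[
\tfrac{1}{\gamma}\zeta^{2}+B\,\zeta+2(\bar p - k_o/\nu)\,\xi^\star=0,\qquad B=2(\bar p - k_o/\nu)-2c_N+\tfrac{2\xi^\star}{\gamma},
\]
whose discriminant $B^{2}-8(\bar p - k_o/\nu)\xi^\star/\gamma$ is \emph{not} guaranteed non-negative near the threshold: at $\gamma=1/c_N$ one has $\xi^\star=1$, $B=2(\bar p - k_o/\nu)$, and the discriminant equals $4(\bar p - k_o/\nu)\bigl((\bar p - k_o/\nu)-2c_N\bigr)$, which is negative whenever $0<\bar p - k_o/\nu<2c_N$. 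So a real perturbative root may simply fail to exist in part of the range the theorem must cover. Second, even when a real $\eta$ exists, the inference ``$N\eta=\BigO(1)$, hence $\xi^\star+N\eta>0$ for $N$ large'' is a non sequitur: $N\eta$ tends to a fixed constant that can exceed $\xi^\star$ in magnitude, and you have not said which root of the $\zeta$-quadratic you select. Both issues disappear if you follow the paper and take $\eta=0$.
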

	
	\begin{proof}
		Under the hypothesis of the theorem, \eqref{areu}  reduces to the following system of equations
		\begin{align} \label{xd}
			\mathcal{O}\left(\frac{1}{N}\right) &= \frac{2\bar{p}(1-N)}{N}x_d + \frac{2\bar{p}(N-1)}{N}x_o - \frac{2}{\nu} {k}_d x_d - \frac{2(N-1)}{\nu N}{k}_o x_o \cr
			&\qquad\qquad\qquad\qquad\qquad\qquad\qquad\qquad\qquad+ \frac{1}{\gamma} x_d^2 + \frac{N-1}{\gamma}x_o^2 +\frac{1}{\gamma},
			\\
			\mathcal{O}\left(\frac{1}{N}\right) &= \frac{2\bar{p}(1-N)}{N}x_o + \frac{2\bar{p}}{N} x_d + \frac{2\bar{p}(N-2)}{N}x_o - \frac{2}{\nu} {k}_d x_o - \frac{2}{\nu N}{k}_o x_d 
			\cr
			&\qquad\qquad\qquad\qquad\qquad\qquad\qquad- \frac{2(N-2)}{\nu N} {k}_o x_o + \frac{2}{\gamma} x_d x_o + \frac{N-2}{\gamma}x_o^2. \label{xo}
		\end{align}
		We scale the off-diagonal elements $x_o$ of $X$ according to 
		\begin{equation*}
			\quad \tilde{x}_o = \sqrt{N} x_o,
		\end{equation*}
		which, as we will see later, is also the consistent scaling for a mean-field description of $X.$ 
		\par 
		Then, the previous system reads
		\begin{align} \label{AREdiscrete}
			\begin{split}
				\mathcal{O}\left(\frac{1}{N}\right) &= \frac{2\bar{p}(1-N)}{N}x_d + \frac{2\bar{p}(N-1)}{N\sqrt{N}}\tilde{x}_o - \frac{2}{\nu} k_d x_d - \frac{2(N-1)}{\nu N \sqrt{N}}k_o \tilde{x}_o + \frac{1}{\gamma} x_d^2 + \frac{N-1}{\gamma N}\tilde{x}_o^2 +\frac{1}{\gamma},
				\\
				\mathcal{O}\left(\frac{1}{N}\right) &= \frac{2\bar{p}(1-N)}{N\sqrt{N}}\tilde{x}_o + \frac{2\bar{p}}{N} x_d + \frac{2\bar{p}(N-2)}{N\sqrt{N}}\tilde{x}_o - \frac{2}{\nu\sqrt{N}} k_d \tilde{x}_o - \frac{2}{\nu N}k_o x_d - \frac{2(N-2)}{\nu N \sqrt{N}} k_o \tilde{x}_o \cr&\qquad\qquad+ \frac{2}{\gamma \sqrt{N}} x_d \tilde{x}_o + \frac{N-2}{\gamma N}\tilde{x}_o^2.
			\end{split}
		\end{align}
		From these two equations of system \eqref{AREdiscrete}, and setting
		\begin{equation}\label{eq:c}
			c = \bar{p} + \frac{k_d}{\nu}, 	\quad	\alpha = \bar{p} - \frac{k_o}{\nu} , \quad \beta = \frac{k_d + k_o}{\nu}.
		\end{equation}
		we obtain two second order equations for $x_d$ and $\tilde{x}_o$
		\begin{equation}\label{eq:system_xdxo} 
			\begin{split}
				0 &= x_d^2 -2\gamma c x_d + \tilde{x}_o^2 + \frac{2\gamma \alpha }{\sqrt{N}} \tilde{x}_o + 1 + \mathcal{O} \left( \frac{1}{N}\right),  \\
				0 &= \tilde{x}_o^2 - \frac{2\gamma}{\sqrt{N}} \Bigl( \beta - \frac{x_d}{\gamma} \Bigr) \tilde{x}_o + \mathcal{O}\left( \frac{1}{N}\right).
			\end{split}
		\end{equation}
		For $N$ sufficiently large, their solutions is given by 
		\begin{align*}
			x_d^{\pm} &= \gamma c \pm \sqrt{\gamma^2c^2 - 1 - \tilde{x}_o^2 - \frac{2\gamma \alpha}{\sqrt{N}}\tilde{x}_o}, \quad 
			\tilde{x}_o^{-} = 0, \quad	\tilde{x}_o^{+} =  \frac{2\gamma}{\sqrt{N}}  \Bigl( \beta - \frac{x_d}{\gamma} \Bigr).
		\end{align*}
		Hence, we write the matrix $X$ as
		\begin{equation*}
			X = \frac{\tilde{x}_o^{\pm} }{\sqrt{N}} \ \mathds{1}_{N} + (x_d^{\pm}  - \frac{\tilde{x}_o^{\pm} }{\sqrt{N}}) \ {Id}_{N},
		\end{equation*}
		and the eigenvalues of $X$ are
		\begin{equation*}
			\lambda_i = \lambda = x_d^{\pm}  - \frac{\tilde{x}_o^{\pm} }{\sqrt{N}} \quad \text{for} \quad i = 1,\dots, N-1 \quad \text{and} \quad \lambda_N = x_d^{\pm}  + (N-1) \frac{\tilde{x}_o^{\pm} }{\sqrt{N}}.
		\end{equation*}

		If we subtract equations in \eqref{AREdiscrete} we find the following second order equation in the variable $\lambda = (x_d - \frac{\tilde{x}_o}{\sqrt{N}})$
		
		\begin{equation*}
			\lambda^2 - 2\gamma \Bigl(\bar{p} + \frac{1}{\nu} ({k}_d - \frac{k_o}{N})\Bigr) \lambda + 1 = 0,
		\end{equation*}
		
		with its solutions
		$	\lambda^{\pm} = x_d - \frac{\tilde{x}_o}{\sqrt{N}} =  \gamma c_N \pm \sqrt{ \gamma^2 c_N^2 - 1},
		$
		and for 
		\begin{equation}\label{c_N}
			c_N = \bar{p} + \frac{1}{\nu} \left(k_d - \frac{k_o}{N}\right).
		\end{equation}
		Therefore,  we have	$ \lambda_i = \lambda^{\pm} \quad \text{for} \quad i = 1,\dots, N-1$ and $\lambda_N = \lambda^{\pm} + N \frac{\tilde{x}_o}{\sqrt{N}}.$ 
		Hence, there exists a matrix $X$  satisfying \eqref{areu}, for  $$x_o = x_o^- = 0.$$ In this case, the eigenvalues are $\lambda_i = \lambda_N= \lambda^{\pm}$.
		In order to ensure the  positivity of the eigenvalue $\lambda^{\pm} = \gamma c_N \pm \sqrt{\gamma^2 c_N^2-1}$ needs to be non--negative. This can be expressed in terms of the choice of the parameters  
		$\gamma$ and $c_N:$	 First of all, we have  $\gamma \geq \frac{1}{c_N}$ to ensure the existence of the square root. In addition, provided that $c_N>0$ we obtain 
		\begin{align*}
			& \lambda^+> 0  \mbox{ and } \lambda^- >0.
		\end{align*}
		This finishes the proof. 
	\end{proof}
	
	\begin{remark} We observe that Theorem \ref{Hinf_thm} quantifies the robustness of the feedback control with a lower bound on the parameters of the model. In particular, we can achieve minimal value of $\gamma$ for  large values of $c_N$ in \eqref{eq:cn_hinf}, for example if we fix the values $\bar p, k_d,k_o$ and $N$  for decreasing values of the penalization $\nu$ the control is more robust.
	\end{remark}

	\subsection{Mean-field estimates for $\Hinf$ control}
Large system of interacting agents can be efficiently represented at different scales to reduce the complexity of the microscopic description. Of particular interest are models able to describe the evolution of a density of agents and its moments  \cite{carrillo2014derivation,carrillo2010particle,MR2425606}.

In this section, we analyse robustness of controls in the case of a large number of agents, i.e.  $N\gg 1$, by means of the {\em mean-field limit} of the interacting system.   Hence, we consider the density distribution of agents $f=f(t,v,\theta)$ to describe the collective behaviour of the ensemble of agents. The empirical joint probability distribution of agents for the system \eqref{noisyModel}, 
	is given by
	\begin{equation*}
		f^N(t,v,\theta) = \frac{1}{N} \sum_{i=1}^N \delta (v-v_i(t,\theta)),
	\end{equation*}
	where $\delta(\cdot)$ is a Dirac measure over the trajectories $v_i(t,\theta)$ dependent on the stochastic variable $\theta = (\theta_1,\ldots,\theta_Z)$. 
	
	Hence,  assuming enough regularity assuming that agents remain in a fixed compact domain for all $N$ and in the whole time interval $[0,T]$,  the mean-field limit of dynamics \eqref{noisyModel} is obtained formally as
		\begin{equation*}
		\partial_t f(t,v,\theta) = - \nabla_v\cdot \left( f(t,v,\theta) \left(\left( \bar{p}-\dfrac{k_o}{\nu}\right) m_1[f](t,\theta) - \left(  \bar{p}+\frac{k_d}{\nu}\right)v + \sum_{k=1}^Z\theta_k \right)\right),
	\end{equation*}
	with initial data $ f(0,v,\theta) = f^0(v,\theta)$. 
	
	The latter is obtained as the limit of $f^N(0,v,\theta)$ in the Wasserstein distance given a sequence of initial agents, \cite{carrillo2009double,carrillo2014derivation}.
	The quantity  $m_1[f]$ denotes the first moment of $f$ with respect to $v$  
	\begin{align*}
		m_1[f](t,\theta) = \int_{\R^d} v f(t,v,\theta) dv.
	\end{align*}
	For the many-particle limit, we recover a mean-field estimate of the $\Hinf$ condition similarly as in \ref{Hinf_thm}.
	Indeed, for $N\rightarrow\infty$ the nonlinear system \eqref{AREdiscrete} yields 
	\begin{align*}
		0 &= -2\bar{p}x_d - \frac{2}{\nu} k_d x_d + \frac{1}{\gamma} x_d^2 + \frac{1}{\gamma}x_o^2 +\frac{1}{\gamma}, \; 	0 = \frac{1}{\gamma}x_o^2.
	\end{align*}
	Hence, for any fixed finite $N$ the matrix $X$ is diagonal with the entry 
	\begin{equation*}
		x_d^{\pm} = \gamma \Bigl( \bar{p}+\frac{k_d}{\nu}\Bigr) \pm \sqrt{\gamma^2\Bigl( \bar{p}+\frac{k_d}{\nu}\Bigr)^2-1} + O(\frac{1}N). 
	\end{equation*}
	To ensure that $X$ is positive definite, we only have to assume that $\gamma\geq\dfrac{1}c,$ 
	where  $\gamma$ is the bound of the $\Hinf$ norm, and $c = \bar{p}+k_d/\nu + O(\frac{1}N)$ corresponds to the value defined by equation
	\eqref{c_N}. This shows that for any $N$ there exists a positive definite matrix that guarantees robust stabilization. Note that the condition for any $N$ is the limit
	of the finite-dimensional conditions of the previous Lemma \ref{lem:LMI_reduced} and Theorem \ref{Hinf_thm}. 
	
	\begin{remark} For explicit values of the Riccati coefficients we can characterize the previous estimates more precisely. In particular,  for  $N\rightarrow\infty$  system
		\eqref{kd_ko} reduces to
		\begin{align*}
			0 =  \frac{k_d^2}{\nu} +\left(2\bar{p}+r\right)k_d - 1,\qquad
			0 =   \frac{k_o^2}{\nu} + \left(\frac{2}{\nu}k_d +r\right)k_o - 2\bar{p}k_d,
		\end{align*}
		with solutions
		\begin{equation*}
			k_d^{\pm} = -\nu \left(\bar{p} + \frac{r}{2} \right)\pm \nu\sqrt{\left(\bar{p} + \frac{r}{2} \right)^2+\frac{1}{\nu}}, \quad \quad k_o^{\pm} = -\left(k_d+ \frac{\nu r}{2} \right) \pm \sqrt{\left(k_d+ \frac{\nu r}{2} \right) ^2+2 \nu \bar{p} k_d}.
		\end{equation*}
		In this particular case, the condition of Theorem \ref{Hinf_thm} becomes 
		\begin{equation}\label{eq:gammabound_r}
			\gamma \geq \frac{\sqrt{\nu}}{{\sqrt{\left(\bar p +\frac{r}{2}\right)^2\nu+1}}-\frac{r\sqrt{\nu}}{2}}.
		\end{equation}
		
		In	Figure \ref{fig:gamma} we depict the lower bound of $\gamma$ for $r=0$ 	for different values of $\nu$ and $\bar{p}$. As expected,  smaller values of $\gamma$, hence more robustness, is obtained if the penalization factor  $\nu$  is small or when $\bar{p}$ is large. The latter corresponds to a stronger attraction between agents.
		\begin{figure}[H]
			\begin{center}
				\includegraphics[width=0.5\linewidth]{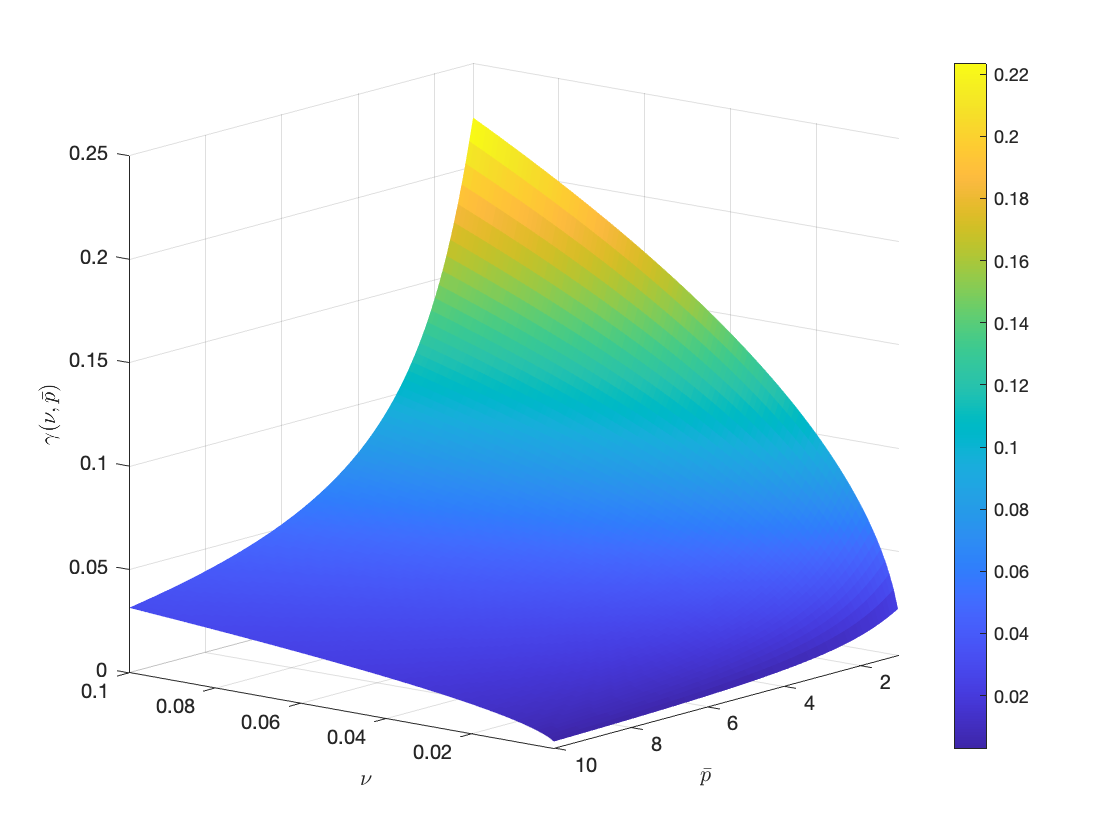}
			\end{center}
			\caption{Numerical computation of the lower bound value of $\gamma$ in \eqref{eq:gammabound_r} as  function of $\nu$ and $\bar{p}$ when $r=0$, i.e. 
				$\gamma(\nu, \bar{p}) = \sqrt{{\nu}/(\bar{p}^2 \nu +1)}.$}\label{fig:gamma}
		\end{figure}
	\end{remark}

	\section{Numerical approximation of the uncertain dynamics}\label{sec:numerics_noise}
	
	In this section, we present numerical tests based on linear microscopic and mean-field equations in presence of uncertainties. In particular, we give numerical evidence of the robustness of the feedback control  \eqref{control} and illustrate a comparison with the averaged control  \eqref{eq:u_KS_comp}. For the numerical approximation of the random space, we employ the Stochastic Galerkin (SG) method belonging to the class of generalized polynomial chaos (gPC) (\cite{le2010spectral,xiu2010numerical}). In the mean-field setting, the evolution of the density distribution is approximated with a Monte Carlo (MC) method, in a similar spirt of particle based gPC techniques developed in \cite{CiCP-25-508}. 
	

	\subsection{SG approximation for  robust constrained interacting agent systems}
	We approximate the dynamics using a stochastic Galerkin approach applied to the interacting particle system with uncertainties \cite{article_Albi_2015,CiCP-25-508}. Polynomial chaos expansion provides a way to represent a random variable with finite variance as a function of an $M$-dimensional random vector using a polynomial basis that is orthogonal to the distribution of this random vector. 
	Depending on the distribution, different expansion types are distinguished, as shown in Table \ref{tab:gPC}.
	\begin{table}
		\centering
		\begin{tabular}{c|c|c}		
			Probability law of $\theta$ & Expansion polynomials & Support\\
			\hline
			Gaussian & Hermite & $(-\infty, +\infty)$\\
			Uniform & Legendre & $[a,b]$ \\
			Beta & Jacobi & $[a,b]$  \\
			Gamma & Laguerre & $[0, +\infty)$\ \\
			Poisson & Charlier & $\mathbb{N}$ \\
			\hline
		\end{tabular}
		\caption {The different choices for the polynomial expansions.}\label{tab:gPC}
		\vskip-5mm
	\end{table}
	We recall first some basic notions on gPC approximation techniques and for the sake of simplicity we consider a one-dimensional setting for the dynamical state $v_i,$ i.e., $d=1$.
	Let $(\Omega,\mathcal{F}, P)$ be a probability space where $\Omega$ is an abstract sample space, $\mathcal{F}$ a $\sigma-$algebra of subsets of $\Omega$ and $P$ a probability measure on $\mathcal{F}$. Let us define a random variable
	\begin{equation}\label{eq:theta_noise}
		\theta : (\Omega, \mathcal{F}) \rightarrow (I_\Theta, \mathcal{B}(\mathbb{R}^Z))
	\end{equation}
	where $I_\Theta \in \mathbb{R}^Z$ is the range of $\theta$ and $\mathcal{B}(\mathbb{R}^Z)$ is the Borel $\sigma$-algebra of subsets of $\mathbb{R}^Z$, we recall that $Z$ is the dimension of the random input $\theta=(\theta_1,\ldots,\theta_Z)$ and where we assume that each component is independent.
	We consider the linear spaces generated by orthogonal polynomials of $\theta_j$ with degree up to $M$: $\lbrace \Phi_{k_j}^{(j)} (\theta)\rbrace_{k_j=0}^M$, with $j = 1,\dots, Z$. Assuming that the probability law for the function $v_i(t,\theta)$ has a finite second order moment, the complete polynomial chaos
	expansion of $v_i$ is given by
	\[
	v_i (t,\theta) = \sum_{k_1,\ldots,k_Z \in \mathbb{N}} \hat{v}_{i,k_1\dots k_Z} (t) \prod_{j=1}^Z\Phi_{k_j}^{(j)}(\theta_{j}),
	\]
	where the coefficients $\hat{v}_{i,k_1\dots k_Z} (t)$ are defined as 
	\[
	\hat{v}_{i,k_1\dots k_Z}(t) = \mathbb{E}_\theta \left[ v_i(t,\theta) \prod_{j=1}^Z\Phi^{(j)}_{k_j} (\theta_j)\right]
	\]
	where the expectation operator $\mathbb{E}_{\theta}$ is computed with respect to the joint distribution $\rho=\rho_1\otimes\ldots\otimes \rho_Z$, and where $\{\Phi^{(j)}_{k}(\theta_j)\}_k$ is a set of polynomials which constitute the optimal basis with respect to the known distribution $\rho(\theta_j)$ of the random variable $\theta_j$, such that 
	\begin{align*}
		\mathbb{E}_{\theta_j} \left[ \Phi^{(j)}_{k}(\theta_1) \Phi^{(j)}_h(\theta_j) \right] &= \mathbb{E}_{\theta_j} \left[ \Phi^{(j)}_h(\theta_j)^2\right] \delta_{hk},
	\end{align*}
	with $\delta_{hk}$ the Kronecker delta.
	From the numerical point of view, we may have an exponential order of convergence for the SG series expansion, unlike Monte Carlo techniques for which
	the order is $\mathcal{O}(1/\sqrt{M})$ where $M$ is the number of samples. Considering the noisy model \ref{noisyModel} with control $u_i(t)$ in \ref{control}, we have
	\begin{equation}\label{eq:dyn_old_ctrl}
		\dot{v}_i (t,\theta) = \frac{\bar{p}}{N} \sum_{j = 1}^N \Bigl(v_j (t,\theta) - v_i (t,\theta)\Bigr) - \frac{1}{N \nu} \sum_{j = 1}^N \Bigl(k_o(t)v_j (t,\theta) + k_d(t) v_i (t,\theta)\Bigr) + \sum_{j = 1}^Z \theta_j.
	\end{equation}
	We apply the SG decomposition to the solution of the differential equation $v_i(t,\theta)$ in \eqref{eq:dyn_old_ctrl} and to the stochastic variable $\theta_k$, and for $i=1,\dots, N, \ l=1,\dots,Z$, we have
	\begin{equation}
		\begin{split}
			v_i^M (t,\theta) &=\sum_{k_1,\ldots,k_Z =0}^M \hat{v}_{i,k_1\dots k_Z} (t) \prod_{j=1}^Z\Phi_{k_j}^{(j)}(\theta_{j}), 
			\\
			\theta_l^M (\theta)& = \sum_{k_1,\ldots,k_Z =0}^M \hat{\theta}_{l,k_1\dots k_Z} (t) \prod_{j=1}^Z\Phi_{k_j}^{(j)}(\theta_{j}), 
		\end{split}
	\end{equation}
	with
	\[
	\hat{\theta}_{l,k_1\dots k_Z} = \mathbb{E}_\theta \left[ \theta_l \prod_{j=1}^Z\Phi^{(j)}_{k_j} (\theta_j)\right]
	= \mathbb{E}_{\theta_l} \left[ \theta_l \Phi^{(l)}_{k_l} (\theta_l)\right]
	\prod_{j=1, j\neq l}^Z \mathbb{E}_{\theta_j} \left[ \Phi^{(j)}_{k_j} (\theta_j)\right].
	\]
	Then we obtain the following polynomial chaos expansion
	\begin{equation}\label{eq:gPC1}
		\begin{split}
			& \frac{d}{dt} \sum_{k_1,\ldots,k_Z =0}^M  \hat{v}_{i,k_1\dots k_Z} \prod_{j=1}^Z\Phi_{k_j}^{(j)}(\theta_{j}) =
			\\
			&= \frac{1}{N} \sum_{h = 1}^N  \sum_{k_1,\ldots,k_Z =0}^M  \left[ \left( \bar{p} - \frac{k_o}{\nu} \right) \hat{v}_{h,k_1\dots k_Z}- \left( \bar{p} + \frac{k_d}{\nu} \right) \hat{v}_{i,k_1\dots k_Z} \right]\prod_{j=1}^Z\Phi_{k_j}^{(j)}(\theta_{j}) 
			\\
			& \quad+ \sum_{l=1}^Z \sum_{k_1,\ldots,k_Z =0}^M \hat{\theta}_{l,k_1\dots k_Z} (t) \prod_{j=1}^Z\Phi_{k_j}^{(j)}(\theta_{j}).
		\end{split}
	\end{equation}
	Multiplying by $\prod_{j=1}^Z\Phi_{k_j}^{(j)}(\theta_{j}) $ and integrating with respect to the distribution $\rho(\theta)$, we end up with
	\begin{align}
		\frac{d}{dt} \hat{v}_{i,k_1\dots k_Z} = \frac{1}{N} &\sum_{h = 1}^N  \left[ \left( \bar{p} - \frac{k_o}{\nu} \right) \hat{v}_{h,k_1\dots k_Z} - \left( \bar{p} + \frac{k_d}{\nu} \right) \hat{v}_{i,k_1\dots k_Z} \right]
		\\
		&+  \sum_{l= 1}^Z \mathbb{E}_{\theta_l} \left[ \theta_l \Phi^{(l)}_{k_l} (\theta_l)\right]
		\prod_{j=1, j\neq l}^Z \mathbb{E}_{\theta_l} \left[ \Phi^{(j)}_{k_j} (\theta_j)\right].
	\end{align}
	For the numerical tests, we approximate the integrals using quadrature rules.
	\begin{remark}
		For  model \ref{eq:u_noiseless}, where the control is averaged with respect to the random sources,
		\begin{equation}
			u_i = -\frac{1}{\nu} \left( {k}_d \mathbb{E}_\theta\left[ v_i \right] + \frac{{k}_o}{N} \sum_{h \neq i}^N  \mathbb{E}_\theta\left[ v_h\right] + {s}\sum_{j = 1}^Z \mathbb{E}_\theta\left[ \theta_j \right] \right),
		\end{equation}
		the SG approximation is given by 
		\begin{equation} \label{eq:gPC2}
			\begin{split}
				\frac{d}{dt} \hat{v}_{i,k_1\dots k_Z}  &= \frac{\bar{p}}{N} \sum_{h = 1}^N  \left( \hat{v}_{h,k_1\dots k_Z} -  \hat{v}_{i,k_1\dots k_Z} \right) 
				\\ 
				- &\frac{ 	\prod_{j=1}^Z \mathbb{E}_{\theta_j} \left[ \Phi^{(j)}_{k_j} (\theta_j)\right] }{	\prod_{j=1}^Z \mathbb{E}_{\theta_j} \left[ \left( \Phi^{(j)}_{k_j} (\theta_j)\right)^2\right]} \left( {k}_d \hat{v}_{i,00\dots0} + \frac{{k}_o}{N} \sum_{h \neq i}^N {k}_o \hat{v}_{h,00\dots 0} + s \sum_{j = 1}^Z \mu_j \right)
				\\
				+ & \sum_{l= 1}^Z \mathbb{E}_{\theta_l} \left[ \theta_l \Phi^{(l)}_{k_l} (\theta_l)\right]
				\prod_{j=1, j\neq l}^Z \mathbb{E}_{\theta_j} \left[ \Phi^{(j)}_{k_j} (\theta_j)\right].
			\end{split}
		\end{equation}
		We recover the mean and the variance of the random variable $v(\theta)$ as
		\begin{align*}
			\mathbb{E}_\theta[v_i(\theta)] &= \int_{\mathbb{R}^2} v_i(\theta) d\rho = \hat{v}_{i,00\dots 0} , \\
			\mathbb{V}_\theta[v_i(\theta)] &= \int_{\mathbb{R}^2} ( v_i(\theta) - \hat{v}_{i,00\dots 0} )^2 d\rho = \sum_{k_1,\ldots,k_Z =0}^M  \hat{v}_{i,k_1\dots k_Z}^2 - \hat{v}_{i,00\dots 0}^2 .
		\end{align*}
	\end{remark}

	\subsection{Numerical tests}
	In this section we present different numerical tests on microscopic and mean-field dynamics, to compare the robustness of controls described in sections \ref{sec:control}. We analyze one- and two-dimensional dynamics, 
	for every test we consider the attractive case with $\bar{p}=1$. The initial distribution of agents $v_0$ is chosen such that consensus to the target $\bar v  = 0$  would not be reached without control action.
	We implement the SG approximations in \eqref{eq:gPC1} and \eqref{eq:gPC2} and we perform the time integration until the final time $T=1$ of the resulting system through a 4th order Runge-Kutta method.
	
	We are taking into account a dynamics with $Z= 2$ additive uncertanties,  $\theta_1$ a random variable with gaussian density distribution $\rho_1 \sim \mathcal{N}(\mu, \sigma^2)$, and  $\theta_2$ with uniform density distribution $\rho_2 \sim \mathcal{U}(a, b)$.
	This assumption of normal and uniform distributions for the stochastic parameter corresponds to the case of Hermite and Legendre polynomial chaos expansions, respectively, as shown in Table \ref{tab:gPC}.
	For every test, we have $M=10$ terms of the SG decomposition.
	
	\subsubsection{Test 1: one-dimensional consensus dynamics}
	
	In the one-dimensional microscopic case we take $N=100$ agents, and a uniform initial distribution of agents, $v_0\sim \mathcal{U}(10,20)$.  Figure \ref{fig:1D} shows means, as continuous and dashed lines, and confidence regions of the two noisy dynamics for different distributions $\rho_1, \ \rho_2$.
	The shaded region is computed as the region between the values 
	\begin{equation*}
		\frac{1}{N} \sum_{i=1}^N \left( \mathbb{E}_\theta[v_i(\theta)] \right) \ \mp \ \max_{i=1,\dots,N} \left( \sqrt{\mathbb{V}_\theta[v_i(\theta)]} \right).
	\end{equation*}
	\begin{figure}[H]
		\begin{center}
			\includegraphics[width=0.45\linewidth]{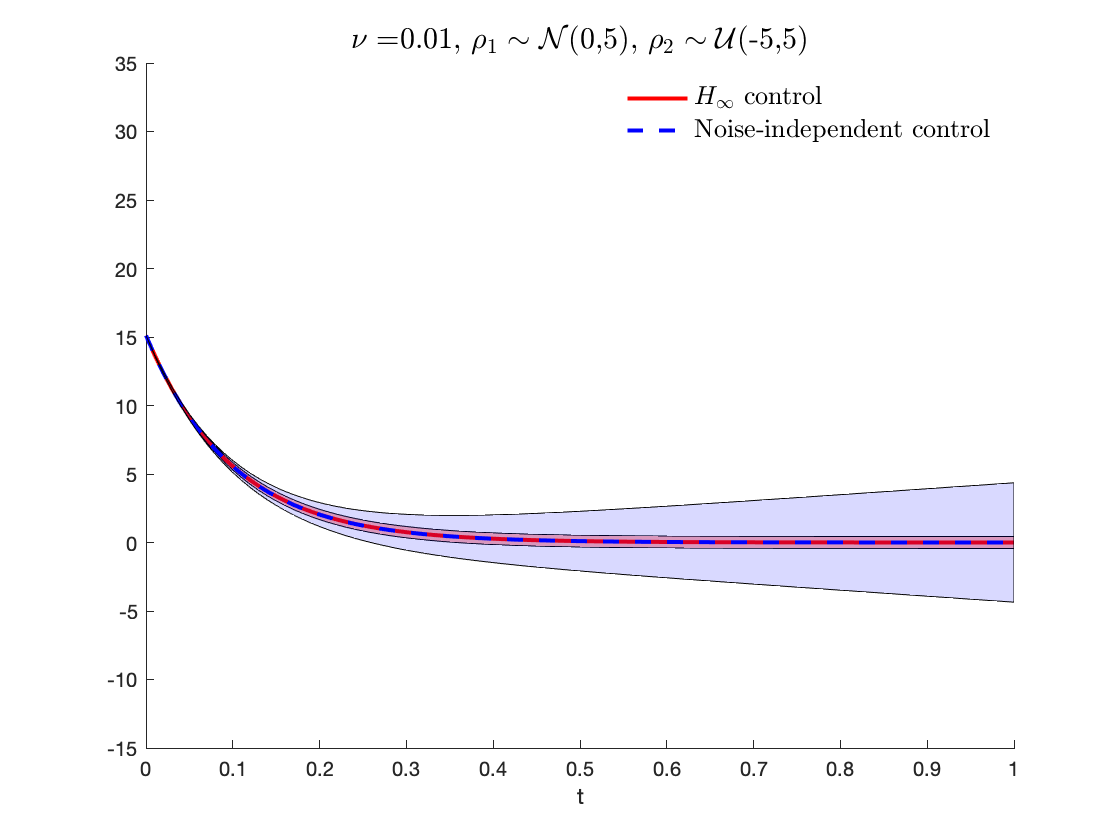}
			\includegraphics[width=0.45\linewidth]{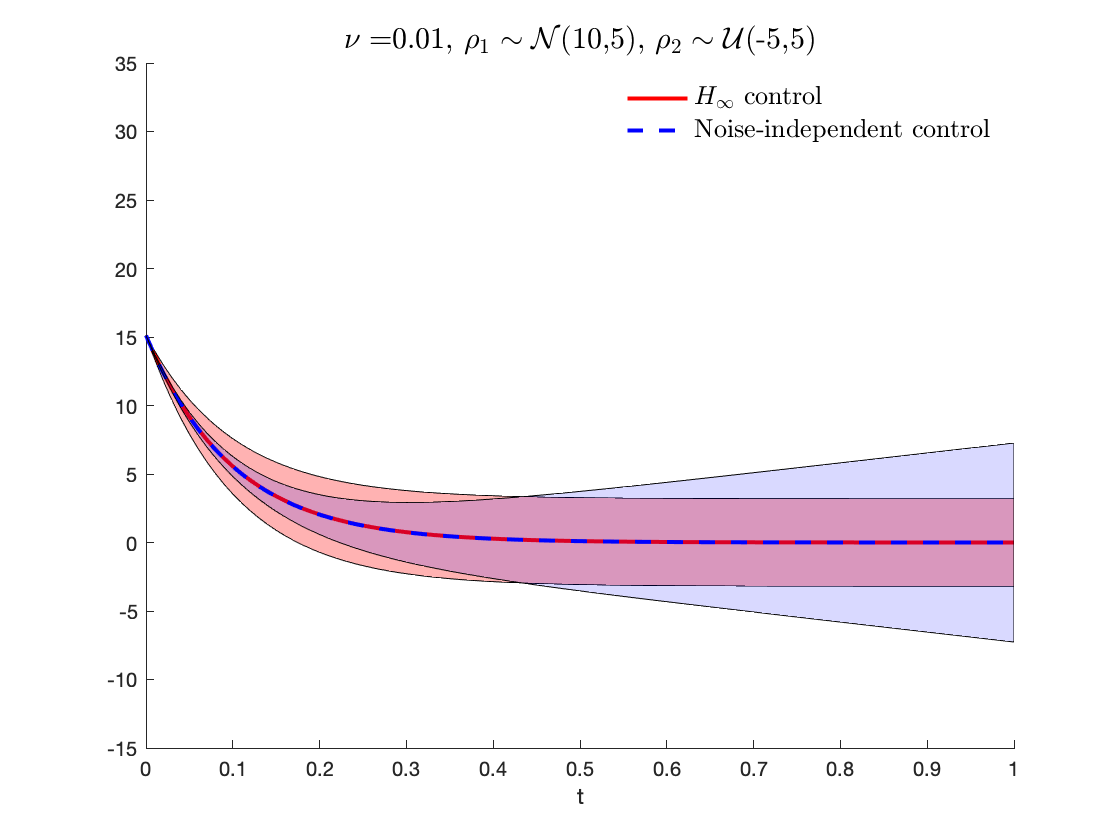}
		\end{center}
		\caption{{\em Test 1.} Comparison between the two controls in \eqref{control} and \eqref{eq:u_noiseless} applied to the multiagent system with uncertainties, in terms of dynamics mean and variance for different $\rho_1, \rho_2$.}
		\label{fig:1D}
	\end{figure}
	
	Numerical results show that both introduced controls are capable to drive the agents to the desired state even in the case of a dynamic dependent on random inputs.
	Moreover, we can observe that, with the $\Hinf$ control, the variance of the uncertain dynamics is stabilized over time, while in the case of a averaged control the variance keeps growing. This is because the averaged control has information only on the mean value of the state and uncertainty, while the $\Hinf$ feedback control directly depends on the state, and as a consequence on the randomness of the dynamics.
	This is also expected given the robustness estimate on the feedback control.

	\subsubsection{Test 2: two-dimensional consensus dynamics}
	
	In the two-dimensional microscopic case, we take $N=100$ agents, and an initial configuration is uniformly distributed on a 2D disc, as shown in Figure \ref{fig:init_distr}.
	2D means and confidence regions of the two noisy dynamics can be seen in Figure \ref{fig:2D},  for different values of the penalization factor $\nu$ and different distributions $\rho_1, \ \rho_2$.
	\begin{figure}[H]
		\begin{center}
			\includegraphics[width=0.45\linewidth]{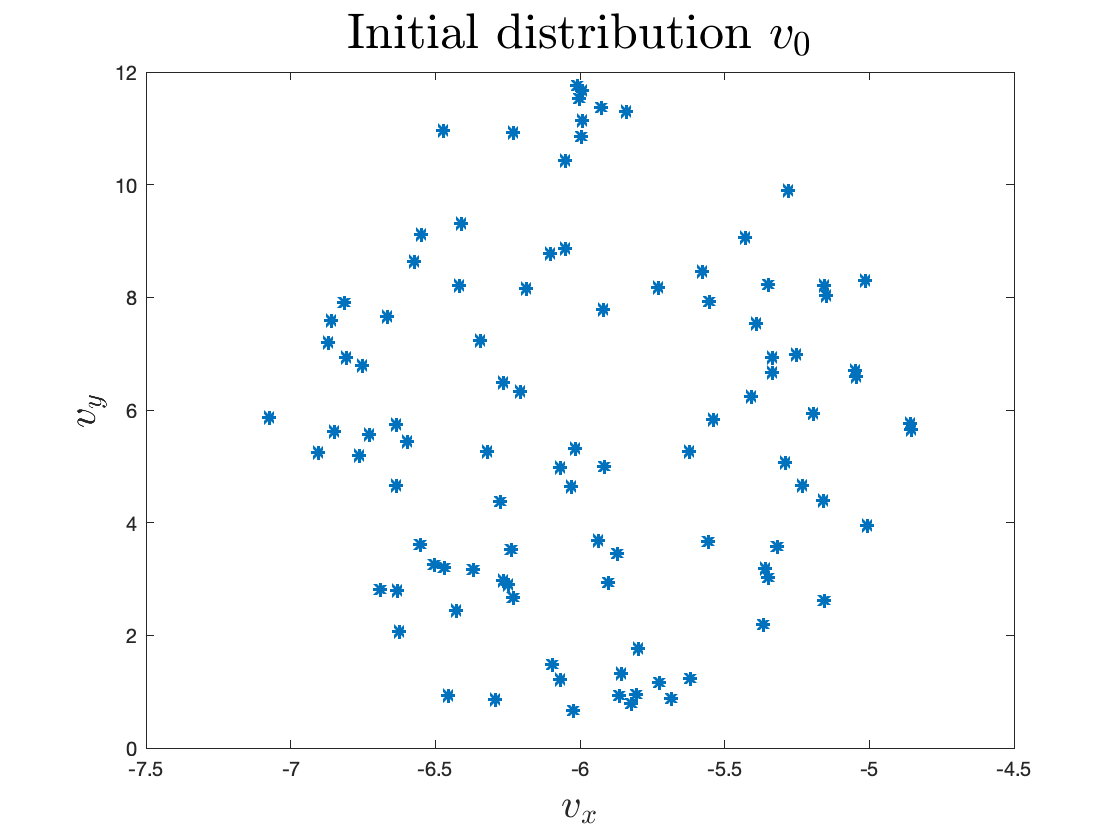}
		\end{center}
		\caption{Test 2: Initial distribution of the agents in the two-dimensional setting, $v_0\in\R^{2N}$.}
		\label{fig:init_distr}
	\end{figure}
	We recall that, for the control $u$ in Eq. \eqref{control}, the size of the transfer function $\hat{G}$ related to the state-space system \eqref{eq:ctrl_hinf} in terms of the $\Hinf$ signal norm is 
	\begin{equation*}
		\Vert \hat{G} \Vert_{\Hinf} \leq \gamma.
	\end{equation*}
	From Theorem \ref{Hinf_thm} we know that the $\Hinf$ control $u$ is robust with a constant $\gamma > \frac{1}{c_N}$, $c_N>0$. We compute the value $c_N$ for the two cases in Figure \ref{fig:2D}, and we have $c_N = 14.29$ for a penalization factor $\nu =0.01$, while $c_N = 4.55$
	for $\nu = 0.1$. As expected, we observe smaller regions for a smaller value of  $\nu$, interpreted as the control cost.
	\begin{figure}[H]
		\begin{center}
			\includegraphics[width=0.45\linewidth]{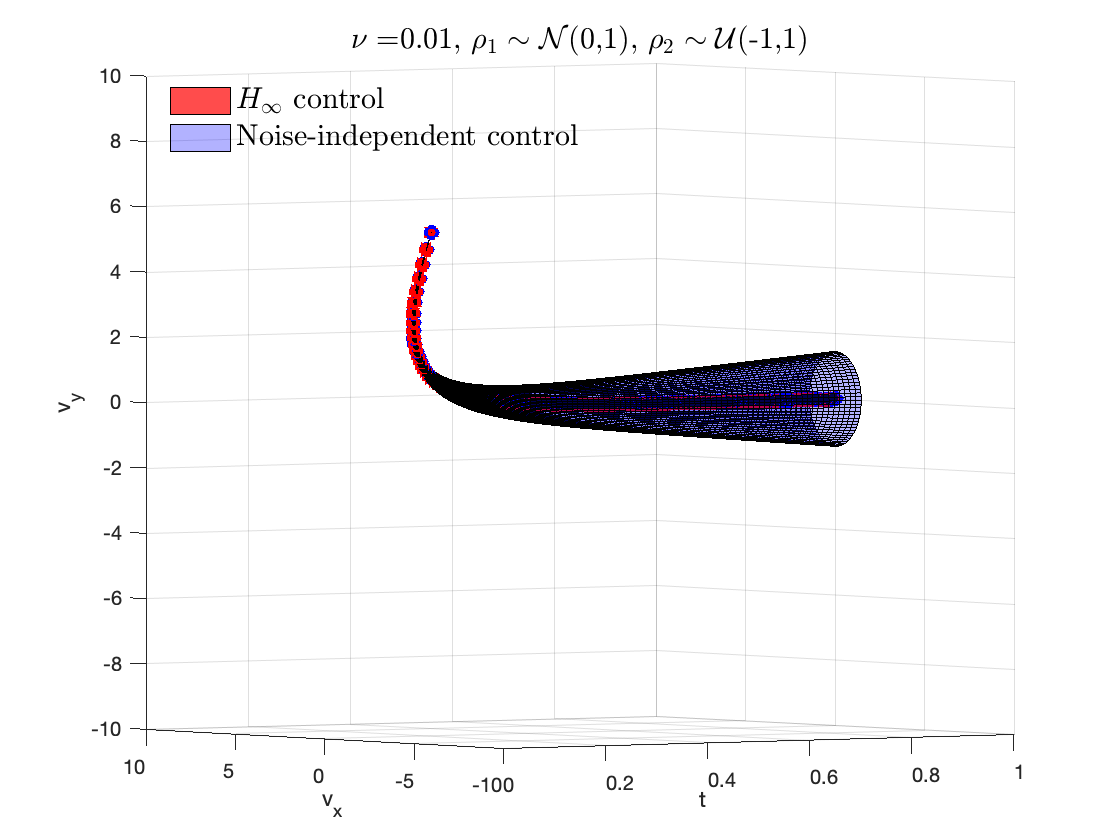}
			\includegraphics[width=0.45\linewidth]{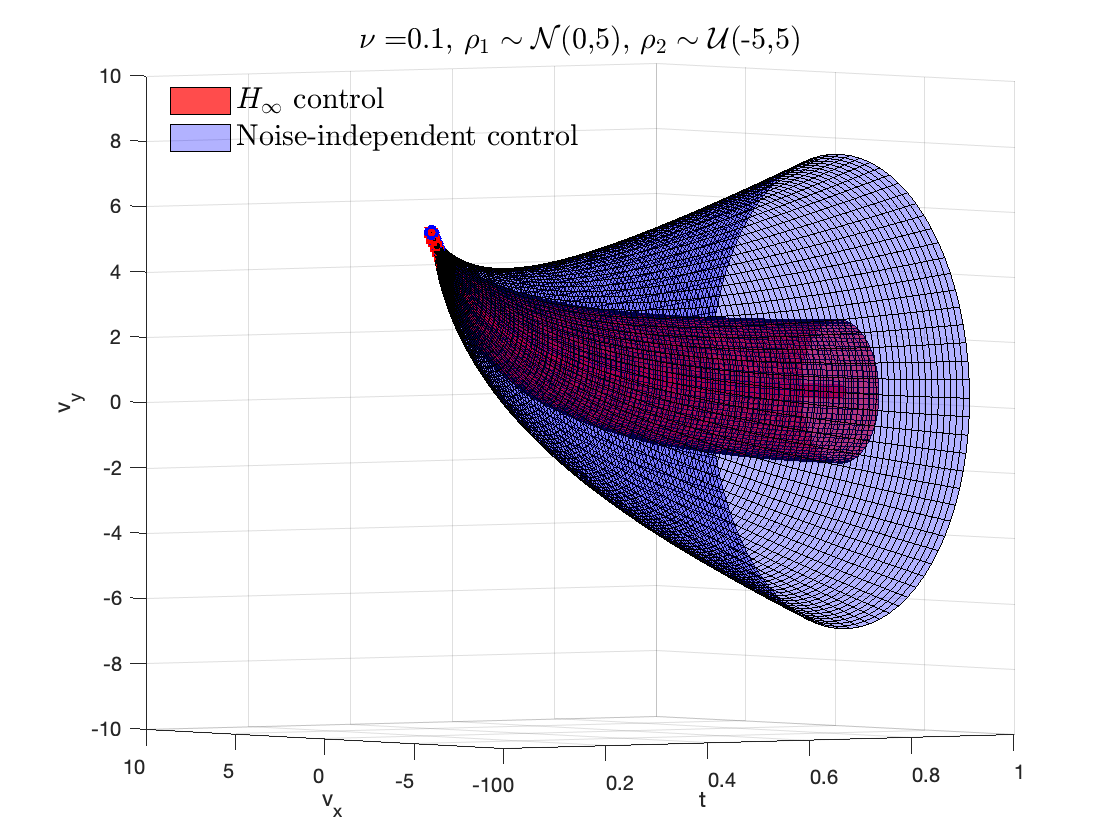}
		\end{center}
		\caption{{\em Test 2.} Two-dimensional case. Comparison between the two controls in \eqref{control} and \eqref{eq:u_noiseless} applied to the uncertain model, in terms of dynamics mean and variance for different values of $\nu, \rho_1, \rho_2$.}
		\label{fig:2D}
	\end{figure}
	
	\subsubsection{Test 3: mean-field consensus dynamics}
	
	In the mean-field limit, the Monte Carlo (MC) method is employed for the approximation of the distribution function $f(t,v,\theta)$ in the phase space whereas the random space at the particle level is approximated through the SG technique.
	
	Considering this MC-SG scheme, we work on an agent system using  Monte Carlo sampling with $N_s = 10^4$ agents, then we consider the SG scheme at the microscopic level. The probability density $f(t,v,\theta)$ is then  reconstructed as the histogram of $v(t,\theta)$. The reconstruction step of the mean density has been done with $50$ bins. The mean and the variance of the statistical quantity are computed as follows

	\begin{equation}\label{eq:quad}
		\begin{split}
			\mathbb{E}_\theta[f(t,v,\theta)] &= \int \int f(t,v,\theta) d\rho_1(\theta_1) d\rho_2(\theta_2) 
			= \sum_{l,h=1}^L f(t,v,\theta^{lh}) \rho_1(\theta_1^l) \rho_2(\theta_2^h) \omega_1^l \omega_2^h,
			\\
			\mathbb{V}_\theta[f(t,v,\theta)] &= \int \int f(t,v,\theta)^{2} d\rho_1(\theta_1) d\rho_2(\theta_2)  - \left( \mathbb{E}_\theta[f(t,v,\theta)] \right)^2
			\\
			&= \sum_{l,h=1}^L f(t,v,\theta^{lh})^{2}  \rho_1(\theta_1^l) \rho_2(\theta_2^h) \omega_1^l \omega_2^h - \left( \mathbb{E}_\theta[f(t,v,\theta)] \right)^2,
		\end{split}
	\end{equation}
	where for $l,h = 1,\dots, L$ and $\hat{v}_{k,j} \in \mathbb{R}^{N_s}$, $f(t,v,\theta^{lh}) $ is reconstructed as the histogram of the data 
	$v(\theta^{lh}) = \sum_{k,j=0}^M \hat{v}_{k,j} \Phi_k(\theta_1^l) \Psi_j(\theta_2^h).$
	
	\begin{figure}[H]
			\begin{subfigure}[b]{0.5\textwidth}
				\centering
				\includegraphics[width=\textwidth]{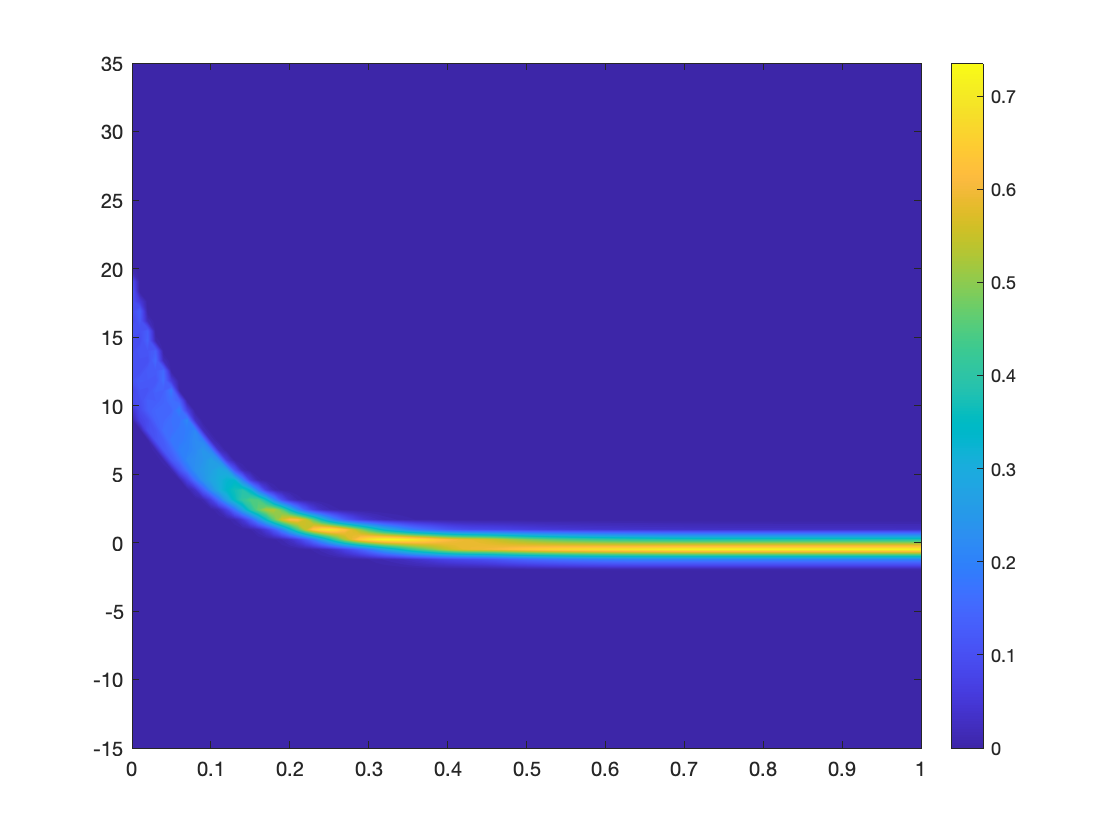}
				\caption{$	\mathbb{E}_\theta[f(t,v,\theta)]$ }
			\end{subfigure}
			\hfill
			\begin{subfigure}[b]{0.5\textwidth}
				\centering
				\includegraphics[width=\textwidth]{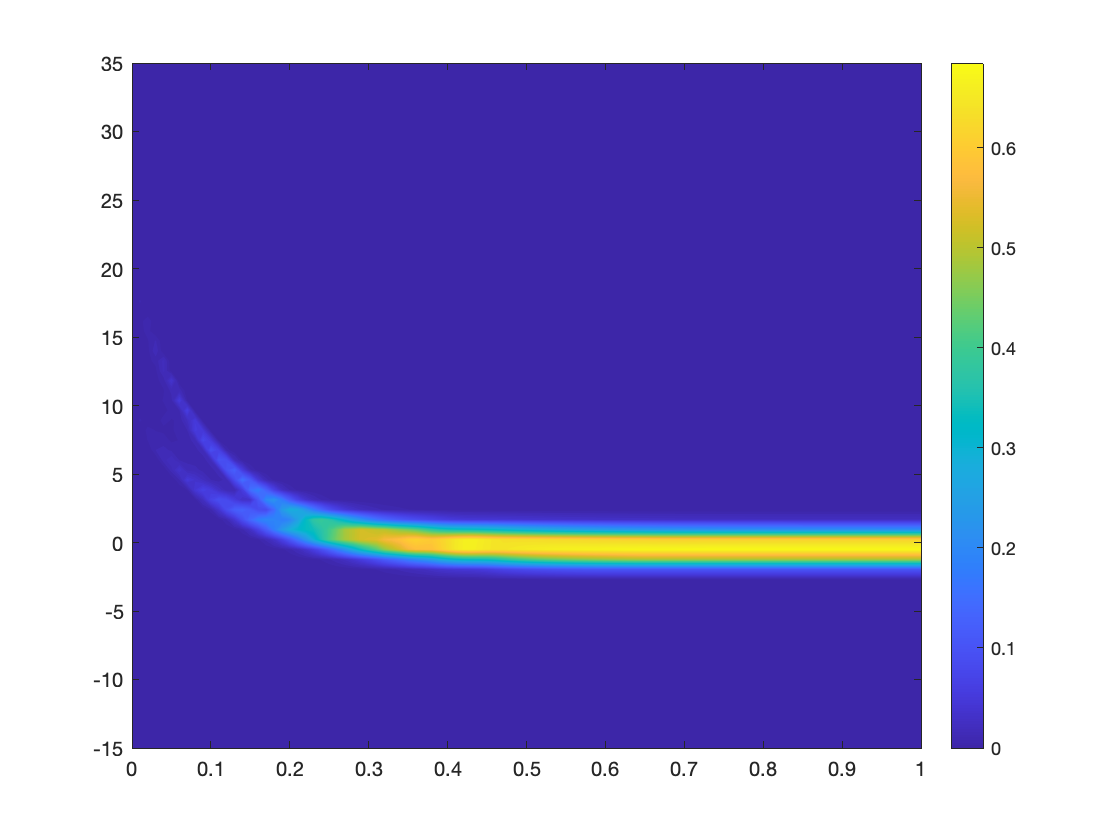}
				\caption{$	\sqrt{\mathbb{V}_\theta[f(t,v,\theta)]}$}
			\end{subfigure}
		\caption{{\em Test 3.}  Mean-field one-dimensional case. (a) Mean and  (b) standard deviation over time for the $\Hinf$ control in \eqref{control}, with parameters $\bar{p}= 1, \nu= 0.01, \rho_1 \sim \mathcal{N}(0,5), \rho_2\sim \mathcal{U}(-5,5)$.}\label{fig:mean1}
	\end{figure}
	
	\begin{figure}[H]
			\begin{subfigure}[b]{0.5\textwidth}
				\centering
				\includegraphics[width=\textwidth]{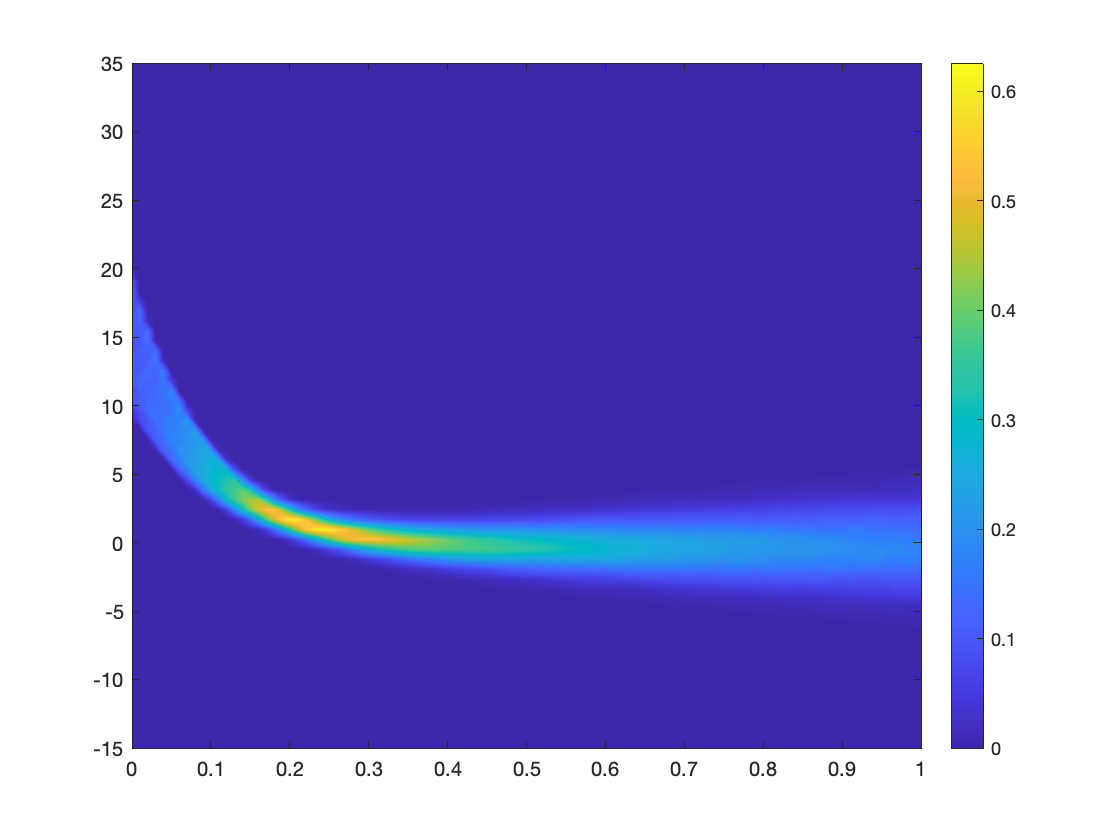}
				\caption{$	\mathbb{E}_\theta[f(t,v,\theta)]$ }
			\end{subfigure}
			\hfill
			\begin{subfigure}[b]{0.5\textwidth}
				\centering
				\includegraphics[width=\textwidth]{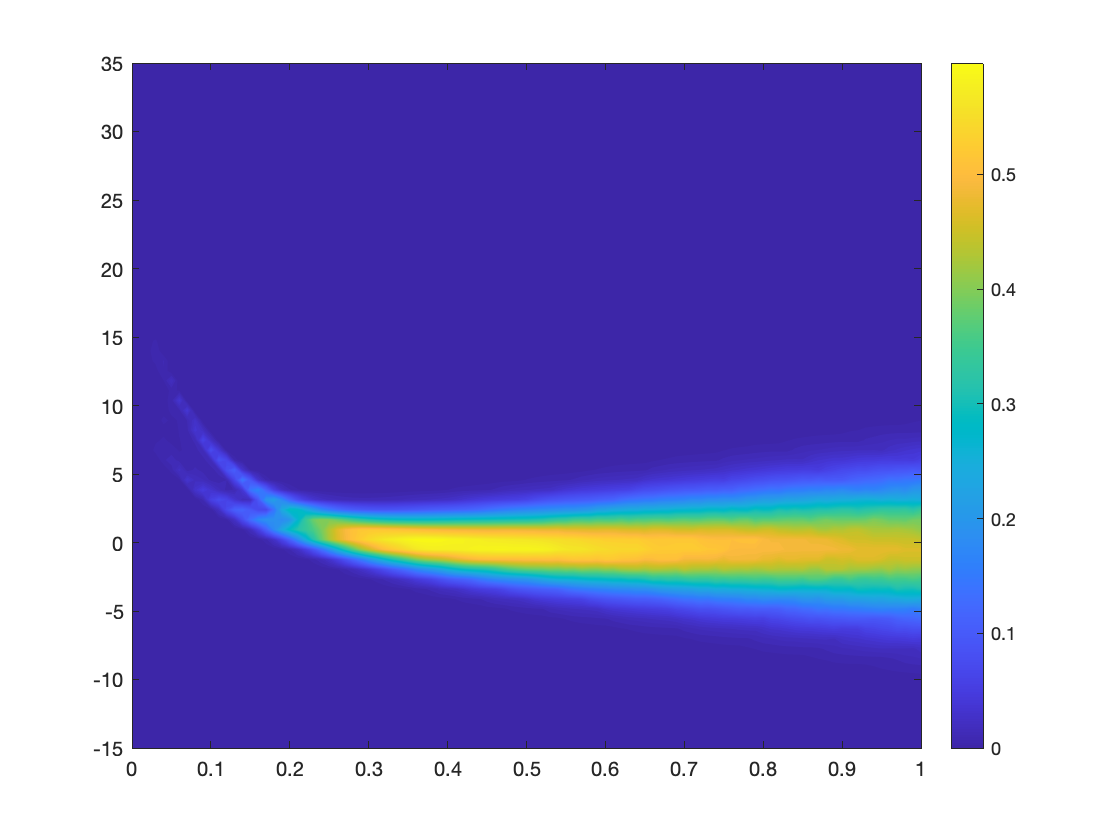}
				\caption{$		\sqrt{\mathbb{V}_\theta[f(t,v,\theta)]}$}
			\end{subfigure}
		\caption{{\em Test 3.}  Mean-field one-dimensional case. (a) mean and  (b) standard deviation over time for the averaged control in \eqref{eq:u_noiseless}, with parameters $\bar{p}= 1, \nu= 0.01, \rho_1 \sim \mathcal{N}(0,5), \rho_2\sim \mathcal{U}(-5,5)$.}\label{fig:mean2}
	\end{figure}
	We consider the same parameters as in Test 1 for the one dimensional microscopic case, where $\theta_1, \theta_2$ are uncertainties respectively with Gaussian distribution $\mathcal N(0,5)$ and uniform distribution $\mathcal U(-5,5)$. Hence we approximate the integrals in \eqref{eq:quad} using, respectively for $\theta_1$ and $\theta_2$, a Gauss-Hermite and Legendre-Gauss quadrature rules with $L = 40$ quadrature points.
	Figures \ref{fig:mean1} and \ref{fig:mean2} show a similar behavior with respect to the microscopic case in left plot of Figure \ref{fig:1D}, in particular we observe that less dispersion of the density for control of type \eqref{control}.

	\section{Conclusions}
	The introduction of uncertainties in multiagent systems is of paramount importance for the description of realistic phenomena.  Here we focused on the mathematical modelling and control of collective dynamics with random inputs and we investigated the robustness of controls proposing estimates based on $\Hinf$ theory in the linear setting.  Reformulating the control problem as a robust $\Hinf$  control problem, we derived sufficient conditions in terms of linear matrix inequalities (LMIs) to ensure the control performance, independently of the type of random inputs. Moreover, a robustness analysis is provided also in a mean-field framework, showing consistent results with the microscopic scale.
	Different numerical tests were proposed to compare the $\Hinf$ control with control synthesized minimizing the expectation of a function with respect to the random inputs. The numerical methods here developed make use of the stochastic Galerkin (SG) expansion for the microscopic dynamics while in the mean-field case we combine an SG expansion in the random space with a Monte Carlo method in the physical variables. The numerical experiments show that both controls are capable to drive the average particle trajectories towards a consensus state considering multiple sources of randomness and in different dimensions.
	We further observe that, in the $\Hinf$ setting, the variance is stabilized over time, this is not surprising since the $\Hinf$ control accounts for the random state in a feedback form, whereas in the noiseless control setting the uncertainty is averaged out. Nonetheless, these results confirm the goodness of the estimates for the control robustness for the uncertain dynamics. Further analysis is needed to extend these results in the $\Hinf$ setting to non-linear dynamics with uncertainities.  This can be studied for example introducing the so-called Hamilton-Jacobi-Isaacs equation, whose solution can be extremely challenging due to the high-dimensionality of multi-agent systems. 
	

	\appendix
	\numberwithin{equation}{section}
	\section{Averaged control}\label{app:noiseless}
	In this section we consider a control by  minimizing the expectation of the cost functional \eqref{eq:u_star} subject to the noisy model \eqref{noisyModel}. Hence, we consider the expected value of the quadratic cost
	\begin{equation*}
		\bar u^*(\cdot) = \arg\min_{u(\cdot)} \mathbb{E} \left[  \int_0^T  \frac{1}{2}( v^\top Q v + \nu u^\top R u)   \ dt \right] ,
	\end{equation*}
	where we introduce the matrices $Q=R= \frac{1}{N} \Id_N$. We claim that in this case an optimal feedback control is obtained as follows
	\begin{align}\label{eq:u_KS}
		\bar u(t) =  - \frac{N}{\nu} \Bigl( {K}  \ \mathbb{E}_{\theta}\left[  v \right] +  {S} \ \mathbb{E}_{\theta}\left[  \theta \right] \Bigr),
	\end{align}
	where ${K} \in \mathbb{R}^{N \times N}$ and  ${S} \in \mathbb{R}^{N \times Z}$ fulfill the Riccati matrix-equations
	\begin{equation}\label{eq:S-K}
		\begin{cases}
			-\dot{{K}} = {K}{A}+{A}^\top {K}-\frac{N}{\nu} K^2 + \frac{1}{N} I_N, \quad {K}(T) = 0_N,
			\\
			\
			\\
			-\dot{{S}} = K B+ A^\top S - \frac{N}{\nu} {K}{S} , \quad {S}(T) = 0_{N\times M}.
		\end{cases}
	\end{equation}

	\begin{thm}
		Assume matrices ${K}$ and $S$ have the following structures
		\begin{align*}
			({K})_{ij}=
			\begin{cases} 
				{k}_d,\qquad i=j,\\
				{{k}_o},\qquad i\neq j,
			\end{cases}
			\quad
			({S})_{ij}= s \cdot \mathds{1}_{N\times Z}.
		\end{align*}
		Matrices $K$ and $S$ are defined by $2$ and $1$ elements respectively.
		
		Then the $i-th$ component of the control $u$ is given by
		
		\begin{equation}\label{eq:u_KS_comp}
			u_i = -\frac{1}{\nu} \left( {k}_d \mathbb{E}\left[ v_i \right] + \frac{{k}_o}{N} \sum_{j \neq i}^N  \mathbb{E}\left[ v_j \right] + {s}\sum_{j = 1}^Z \mathbb{E}\left[ \theta_j \right] \right) ,
		\end{equation}
		where ${k}_d$ and ${k}_o$, after a scaling, turn out to be the same as in equations \eqref{kd_ko}, while ${s}$ satisfies
		
		\begin{equation} \label{eq:s}
			-\dot{{s}} = k_d + \alpha(N) k_o - \frac{s}{\nu } \left( k_d + \alpha(N) k_o \right), \qquad s(T)=0,
		\end{equation}
		with $\alpha(N) = \frac{N-1}{N}$.
	\end{thm}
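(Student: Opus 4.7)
The plan is to derive the optimal feedback via a standard linear--quadratic argument and then to exploit the structure of the ansatz for $K$ and $S$ to reduce the matrix Riccati system to the claimed scalar equations. Since the dynamics \eqref{noisyModel} are linear in $(v,u,\theta)$ and the candidate feedback $\bar u(t)$ is deterministic, taking expectation with $\mu:=\mathbb{E}[\theta]$ produces a closed ODE for the mean $V(t):=\mathbb{E}_\theta[v(t,\theta)]$ of the form $\dot V = AV + Bu + B\mu$, while the fluctuation $v-V$ solves a linear ODE that is independent of $u$ and contributes only an additive, control--independent term to the cost. Minimising the resulting quadratic tracking problem with affine inhomogeneity $B\mu$ via an HJB ansatz $W(t,V)=\tfrac{1}{2}V^\top K(t)V + V^\top S(t)\mu + \text{const}$ together with $R^{-1}=N\,\Id_N$ yields the feedback \eqref{eq:u_KS} and the coupled Riccati system \eqref{eq:S-K}.

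Next I substitute the structural ansatz. Writing $K = (k_d-k_o)\Id_N + k_o J$ with $J:=\mathds{1}_{N\times N}$, and noting that $A$ admits the analogous decomposition $A=(a_d-a_o)\Id_N + a_o J$, the products $KA$, $A^\top K$ and $K^2$ stay in the span of $\{\Id_N,J\}$. Hence the matrix Riccati equation for $K$ decouples into exactly two scalar identities in $k_d,k_o$; after applying the same rescalings $k_d\leftarrow Nk_d,\ k_o\leftarrow N^2 k_o$ used for the deterministic ARE, these identities reduce to \eqref{kd_ko}.

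For $S=s\,\mathds{1}_{N\times Z}$ the key observation is that $A$ is a graph Laplacian, so $A\mathbf{1}_N=0$ and therefore $A^\top S=0$. Combined with the identities $KB = (k_d+(N-1)k_o)\,\mathds{1}_{N\times Z}$ and $KS = s(k_d+(N-1)k_o)\,\mathds{1}_{N\times Z}$, the matrix ODE \eqref{eq:S-K} for $S$ collapses to a single scalar ODE in $s$. Applying the companion scaling $s\leftarrow Ns$ (dictated by consistency with the $k_o$--rescaling, since the $i$-th component of $\bar u=-\frac{N}{\nu}(KV+S\mu)$ collects the combinations $Nk_o$ and $Ns$ through the prefactor $N/\nu$) produces exactly \eqref{eq:s}. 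Reading off the $i$-th component of $\bar u$ in these scaled variables then yields the explicit formula \eqref{eq:u_KS_comp}.

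The main obstacle is purely bookkeeping: one must verify that the ansatz is preserved by the nonlinear term $K^2$ and by the interaction $KA$ (straightforward, since $\{\Id_N,J\}$ spans a commutative subalgebra closed under multiplication), and that the three scalings of $k_d,k_o,s$ are mutually consistent so that the scalar equations appear in the stated normalisation. The vanishing $A^\top S=0$ is what prevents extra cross terms from appearing in \eqref{eq:s}, leaving a closed scalar ODE in $s$ driven only by $k_d+\alpha(N)k_o$.
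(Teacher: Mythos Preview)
Your proposal is correct and follows essentially the same route as the paper: both reduce the matrix Riccati system \eqref{eq:S-K} to scalar equations by exploiting that $K$, $A$, and $S$ lie in the span of $\{\Id_N,\mathds{1}\}$, then apply the scalings $k_d\leftarrow Nk_d$, $k_o\leftarrow N^2 k_o$, $s\leftarrow Ns$ to obtain \eqref{kd_ko} and \eqref{eq:s}. Your observation that $A^\top S=0$ because $A$ is a graph Laplacian is exactly what the paper obtains by computing $(A^\top S)_{ij}=s(a_d+(N-1)a_o)$ and then ``simplifying using the dependency of coefficients $a_d,a_o$'', since $a_d+(N-1)a_o=0$; the two formulations are equivalent.
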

	
	\begin{proof}
		From Proposition 2.1 of \cite{albi2021momentdriven}, we can prove that ${k}_d$, ${k}_o$ satisfy equations \eqref{kd_ko}.
		Given the structure of the matrices $S$, $K$ and $B$, and solving the second equation in \eqref{eq:S-K} componentwise leads to the following identities:
		\begin{align*}
			(KB)_{ij} &= k_d + (N-1)k_o,
			\\
			(A^\top S)_{ij} &= s\left( a_d + (N-1)a_o \right),
			\\
			(K S)_{ij} &= s\left( k_d + (N-1)k_o \right).
		\end{align*}
		We can further simplify the Riccati-matrix system \eqref{eq:S-K} using the dependency of coefficients $a_d,a_o$.
		And since we are interested in the dynamics of large number of agents, we introduce the following scalings
		\begin{equation}
			s \leftarrow N s, \quad {k}_d \leftarrow N k_d, \quad {k}_o \leftarrow N^2 k_o,\quad \alpha(N) = \frac{N-1}{N}.
		\end{equation}
		For the sake of simplicity, we keep the same notation also for the scaled variables $s, k_d, k_o$. Under this scaling, the second equation in \eqref{eq:S-K} reads 
		\[
		-\dot{s} = k_d + \alpha(N) k_o - \frac{s}{\nu}\left( k_d + \alpha(N) k_o \right),\qquad s(T)=0\,, 
		\]
		and the Riccati feedback law \eqref{eq:u_KS} is given by Eq. \eqref{eq:u_KS_comp}
		The coefficients $s(t), k_d(t), k_o(t)$, have to be determined integrating backwards in time.
	\end{proof}
	
	\begin{remark}
		In the  infinite horizon  case and without discount factor, \eqref{eq:s} reduces to
		\begin{align*}
			0 = k_d + \alpha(N) k_o - \frac{s}{\nu } \left( k_d + \alpha(N) k_o \right),
		\end{align*}
		hence s = $\nu$.
	\end{remark}

	\section{$\Hinf$ control setting}\label{app:Hinf}
	Define a state-space system $G: L^2 \rightarrow L^2$ by $y = G\theta$, with $\theta$ a random input as defined in \eqref{eq:theta_noise}, if
	\begin{align*}
		\ddt{v}(t) &= \widetilde{A} v(t) + \widetilde{B} \theta, \\
		y(t) &= C v(t) + D \theta,
	\end{align*}
	where $v(t)$ is the system state, and $y(t)$ is the observed output.
	It is proved (see e.g. \cite{dullerud2013course,boyd1994linear}) that for any stable state-space system, $G$, there exists a frequency transfer function $\hat{G} \in R\Hinf$ such that
	\begin{equation}\label{Ghat}
		\hat{G}(s) = \left[\begin{array}{@{}c|c@{}}
			\widetilde{A} & \widetilde{B} \\
			\hline
			C & D
		\end{array}\right] = D + C(s I_N-\widetilde{A})^{-1} \widetilde{B}.
	\end{equation}
	where $s$ is a complex number and
	$R\Hinf$ is the set of proper rational functions with no poles in the closed right half-plane, in particular $R\Hinf = R \cap \Hinf$, where $R$ is the space of rational functions and $\Hinf$ is a signal space of ``transfer functions'' for linear time-invariant systems, we refer to \cite{boyd1994linear,duan2013lmis, franklin2002feedback} for further theoretical details.
	
	State space $\widetilde{A},\widetilde{B},C,D$ or the transfer function is a representation of a system and these formats uses matrices or complex-valued functions (a signal) to parameterize the representation. 
	The signal norm $\Vert \cdot \Vert_{\Hinf}$ measures the size of the transfer function in a certain sense and the $\Hinf$-optimal control problem consists of finding a stabilizing controller $u = K y$ which minimizes the cost function 
	
	\begin{equation*}
		\Vert \hat{G} \Vert_{\Hinf} = \text{ess}\sup_{\omega \in \mathbb{R}} \bar{\sigma} (\hat{G}(i \omega)).
	\end{equation*}
	
	The direct minimization of the cost $\Vert \hat{G} \Vert_{\Hinf}$ turns out to be a very hard problem, and
	it is therefore not feasible to tackle it directly. Instead, it is much easier to construct
	conditions that state whether there exists a stabilizing controller which achieves the norm bound 
	
	\begin{equation*}
		\Vert \hat{G} \Vert_{\Hinf} \leq \gamma,
	\end{equation*}
	for a given $\gamma > 0$.
	
	
	The history of LMIs in the analysis of dynamical systems begins in about 1890, when Lyapunov published his seminal work introducing what we now call Lyapunov theory.
	One of the major next major contributions that we use in this work came in the early 1960’s, when Yakubovich, Popov, Kalman, and other researchers succeeded in reducing the solution of the LMIs to what we now call the positive-real (PR) lemma, that shows how LMIs can be used to constrain the eigenvalues of a system \cite{yakubovich1964solution,yakubovich1967method}.
	\begin{lem}\label{lem:LMI}
		Given the frequency transfer function $\hat{G}$, the following are equivalent:
		\begin{itemize}
			\item $\Vert \hat{G} \Vert_{\Hinf} \leq \gamma$.
			\item $\exists$ a positive definite square matrix of order $N$, $X> 0$ s.t. \eqref{lmi} holds.
			\begin{equation}\label{lmi}
				\begin{bmatrix}
					\widetilde{A}^\top X + X\widetilde{A} & X\widetilde{B}        \\
					\widetilde{B}^\top X      & -\gamma I_Z
				\end{bmatrix}
				+ \frac{1}{\gamma}
				\begin{bmatrix}
					C^\top \\
					D^\top
				\end{bmatrix}
				\begin{bmatrix}
					C & D
				\end{bmatrix}
				<
				0.
			\end{equation}
		\end{itemize}
	\end{lem}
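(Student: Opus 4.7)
The plan is to establish the equivalence via the classical Bounded Real Lemma strategy, splitting into the easy sufficient direction (LMI $\Rightarrow$ norm bound) and the harder necessary direction (norm bound $\Rightarrow$ LMI).

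For the sufficient direction, assume $X>0$ satisfies \eqref{lmi}, consider a trajectory of the state-space system with $L^2$ input $\theta$ and zero initial state, and introduce the storage function $V(v)=v^\top X v$. Differentiating along the dynamics gives
\[
\ddt V = v^\top(\widetilde{A}^\top X + X\widetilde{A})v + 2 v^\top X \widetilde{B}\theta.
\]
Setting $\xi = (v^\top,\theta^\top)^\top$, a short computation shows
\[
\ddt V + \frac{1}{\gamma}|y|^2 - \gamma |\theta|^2 \;=\; \xi^\top M \xi,
\]
where $M$ is precisely the block matrix on the left of \eqref{lmi}. The strict inequality $M<0$ then yields the dissipation $\ddt V \le \gamma|\theta|^2 - \frac{1}{\gamma}|y|^2$; integrating over $[0,\infty)$ with $V(v(0))=0$ and $V\ge 0$ gives $\|y\|_{L^2}^2 \le \gamma^2\|\theta\|_{L^2}^2$. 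Parseval's identity identifies this induced $L^2\to L^2$ gain with $\|\hat G\|_{\Hinf}$, so the norm bound follows. Internal stability of $\widetilde{A}$ (so that $\hat G\in R\Hinf$) comes from specializing to $\theta\equiv 0$: the top-left block $\widetilde{A}^\top X + X\widetilde{A} < 0$ with $X>0$ is a Lyapunov inequality forcing $\widetilde{A}$ to be Hurwitz.

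For the necessary direction, I would pass to the frequency domain and build $X$ from the stable invariant subspace of a Hamiltonian matrix. After an $\varepsilon$-perturbation of $\gamma$ (removed at the end by a limiting argument) one has $\|D\|<\gamma$, so the $(2,2)$-block $-\gamma I_Z + \tfrac{1}{\gamma}D^\top D$ is invertible and a Schur complement against it recasts \eqref{lmi} as an algebraic Riccati inequality in $X$. The Kalman--Yakubovich--Popov lemma then asserts that $\|\hat G\|_{\Hinf}<\gamma$ is equivalent to $\gamma^2 I - \hat G(i\omega)^{*}\hat G(i\omega) > 0$ for all $\omega\in\R\cup\{\infty\}$, which in turn is equivalent to the associated Hamiltonian matrix having no purely imaginary eigenvalue. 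The standard graph-subspace construction then produces a symmetric stabilizing solution of the corresponding algebraic Riccati equation, and this solution is positive definite under the mild observability hypotheses implicit in the setup. Unwinding the Schur complement recovers the strict LMI \eqref{lmi}.

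The main obstacle is precisely the necessity implication: converting a spectral bound on $\hat G$ along $i\R$ into a finite-dimensional matrix certificate. The delicate technical points are (i) invertibility of the $(2,2)$-block, which requires $\|D\|<\gamma$ and is handled by an $\varepsilon$-perturbation plus limit; (ii) the KYP equivalence between the frequency-domain bound and the Hamiltonian spectrum, which is the substantive analytic content; and (iii) positive definiteness of the Riccati solution, which follows from the observability of $(C,\widetilde{A})$ or by a further small perturbation of $C^\top C$. Since the statement is the classical Bounded Real Lemma, I would present the sufficient direction in full as above and cite \cite{boyd1994linear,dullerud2013course,duan2013lmis} for the details of the converse.
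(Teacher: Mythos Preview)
The paper does not actually prove this lemma: it is stated as the classical Bounded Real / KYP lemma and attributed to Yakubovich, Popov, and Kalman with references \cite{yakubovich1964solution,yakubovich1967method}; the subsequent Lemma~\ref{ARElemma} is the one that receives a proof in the appendix. So there is no ``paper's own proof'' to compare against --- the paper simply imports the result from the literature.

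Your proposal is the standard dissipativity / storage-function argument for sufficiency and the Hamiltonian-spectrum / KYP construction for necessity, which is correct and is exactly the content of the references the paper cites (and the ones you cite). In effect you and the paper arrive at the same place: the sufficiency direction is elementary and you write it out, while for necessity both you and the paper defer to the classical literature. Nothing is missing on your side; if anything, you are supplying more detail than the paper does.
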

	
	In \cite{willems1971least} the following Lemma with equivalent characterization through a Riccati equation as been established:
	
	\begin{lem}\label{ARElemma}
		The following are equivalent:
		\begin{itemize}
			\item $\exists X> 0$ s.t. \eqref{lmi} holds.
			\item $\exists X> 0$ s.t. \eqref{are} holds.
			\begin{equation}\label{are}
				\widetilde{A}^\top X + X\widetilde{A} - (X\widetilde{B} +  C^\top D) (-\gamma I_Z + \frac{1}{\gamma} D^\top D)^{-1} (\widetilde{B}^\top X + D^\top C) + \frac{1}{\gamma} C^\top C = 0.
			\end{equation}
		\end{itemize}
	\end{lem}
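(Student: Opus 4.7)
The plan is to establish the equivalence via the Schur complement lemma together with the classical existence theory for indefinite algebraic Riccati equations. The LMI \eqref{lmi} is affine in $X$ while the ARE \eqref{are} is quadratic, but both encode the same Riccati operator; the passage between them is a Schur-complement manipulation followed by a strict-inequality-to-equality argument.

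\textbf{Step 1 (Schur reduction).} First I would apply the Schur complement to the block matrix on the left-hand side of \eqref{lmi}. Negative definiteness of the $(2,2)$ block $-\gamma I_Z + \tfrac{1}{\gamma} D^\top D$ is a necessary consequence of the LMI (it encodes $\bar\sigma(D) < \gamma$, which is implied by $\|\hat G\|_{\Hinf} \leq \gamma$ in the limit $\omega \to \infty$). Under this condition, \eqref{lmi} is equivalent to the strict negative definiteness of the Schur complement of that block, and a direct computation identifies that complement with the left-hand side of \eqref{are} viewed as a strict inequality $\mathcal{R}(X) < 0$, where
\begin{equation*}
\mathcal{R}(X) := \widetilde{A}^\top X + X\widetilde{A} - (X\widetilde{B} + C^\top D)\bigl(-\gamma I_Z + \tfrac{1}{\gamma} D^\top D\bigr)^{-1}(\widetilde{B}^\top X + D^\top C) + \tfrac{1}{\gamma} C^\top C.
\end{equation*}
The lemma therefore reduces to the equivalence of the two statements: there exists $X > 0$ with $\mathcal{R}(X) < 0$ versus there exists $X > 0$ with $\mathcal{R}(X) = 0$.

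\textbf{Step 2 (Equation $\Rightarrow$ inequality).} This direction is a routine perturbation. Given $X_0 > 0$ satisfying $\mathcal{R}(X_0) = 0$, I would replace the constant term $\tfrac{1}{\gamma} C^\top C$ by $\tfrac{1}{\gamma} C^\top C - \delta I$ for small $\delta > 0$ and invoke continuity of $\mathcal{R}$ and of the implicit Riccati map to produce a nearby $\widetilde X_\delta > 0$ satisfying the perturbed equation, which then yields $\mathcal{R}(\widetilde X_\delta) < 0$ for the original operator. Reassembling the block matrix via Schur then returns a strict LMI.

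\textbf{Step 3 (Inequality $\Rightarrow$ equation, the hard part).} This direction is the substantive content and is precisely the classical result of Willems~\cite{willems1971least}: if there exists $X > 0$ with $\mathcal{R}(X) < 0$, and if the pair $(\widetilde{A}, \widetilde{B})$ is stabilizable, then there exists a maximal positive definite solution $X^* > 0$ of $\mathcal{R}(X^*) = 0$, constructed as the monotone limit of a Newton-type iteration initialized at the strict subsolution. The main obstacle is verifying stabilizability in the present setting; this is in fact automatic because $\widetilde{A} = A - \tfrac{1}{\nu} K$ is Hurwitz by construction (it is the closed-loop matrix associated with the stabilizing feedback synthesized from the Riccati equation \eqref{eq:Riccati_inf} in Section~\ref{sec:control}), and $\widetilde{B} = \mathds{1}_{N\times Z}$ ensures controllability from the disturbance channel. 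With these structural facts in hand, Willems' theorem yields the desired $X > 0$ and closes the equivalence.
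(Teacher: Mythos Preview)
Your approach and the paper's coincide on the core mechanism: both reduce the block LMI \eqref{lmi} to the Riccati expression via the Schur complement with respect to the $(2,2)$ block $-\gamma I_Z + \tfrac{1}{\gamma} D^\top D$. The paper's argument essentially stops after this reduction; its computation with the test vectors $(\xi,\eta)$ and $(\xi,\varrho)$ shows only that a solution of the ARE renders the block matrix negative \emph{semi}definite (equality occurs when $\varrho=\eta$), and the reverse implication is not written out beyond the attribution to \cite{willems1971least}. Your Step~3, which explicitly invokes Willems' existence theory to pass from the strict Riccati inequality to the Riccati equation under stabilizability, is therefore more complete than the sketch the paper actually provides.

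One slip in Step~2: replacing $\tfrac{1}{\gamma}C^\top C$ by $\tfrac{1}{\gamma}C^\top C - \delta I$ yields the perturbed operator $\mathcal{R}_\delta = \mathcal{R} - \delta I$, so a solution $\tilde X_\delta$ of the perturbed equation satisfies $\mathcal{R}(\tilde X_\delta) = +\delta I$, which is the wrong sign. The standard way to get ARE $\Rightarrow$ strict LMI is to use that the stabilizing solution $X_0$ has Hurwitz closed loop $A_{cl} := \widetilde A - \widetilde B\,(-\gamma I_Z + \tfrac{1}{\gamma} D^\top D)^{-1}(\widetilde B^\top X_0 + D^\top C)$: solve $A_{cl}^\top Y + Y A_{cl} = -I$ for $Y>0$ and check that a first-order expansion gives $\mathcal{R}(X_0+\epsilon Y) = \epsilon\,(A_{cl}^\top Y + Y A_{cl}) + O(\epsilon^2) = -\epsilon I + O(\epsilon^2) < 0$ for small $\epsilon>0$. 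With this correction your outline is sound and, indeed, more careful than the paper's own proof.
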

	\begin{proof}
		The structure of of Eq. \eqref{lmi} is
		\begin{equation*}
			\begin{bmatrix}
				\hat{A} & \hat{B}
				\\
				\hat{B}^\top & \hat{D}
			\end{bmatrix}
			\begin{bmatrix}
				x_1
				\\
				x_2
			\end{bmatrix}
			=
			\begin{bmatrix}
				0
				\\
				0
			\end{bmatrix},
		\end{equation*}
		this system can be solved using the Schur-complement theory.
		Provided that $\widehat{D}^{-1}$ exists, we have
		\begin{equation*}
			\begin{cases}
				\hat{A} x_1 + \hat{B}x_2 = 0 , \\
				\hat{B}^\top x_1 + \hat{D}x_2 = 0 ,
			\end{cases}
		\end{equation*}
		
		\[ \rightarrow \quad
		x_2 = - \hat{D}^{-1} \hat{B}^\top x_1
		\quad \rightarrow \quad
		\left( \hat{A} - \hat{B} \hat{D}^{-1} \hat{B}^\top \right) x_1 = 0.
		\]
		
		Hence, provided that exists $X$ such that \eqref{are} has a solution, then for all $ \xi$
		
		\[
		\xi^\top \left( \widetilde{A}^\top X + X\widetilde{A} - (X\widetilde{B} +  C^\top D) (-\gamma I_Z + \frac{1}{\gamma} D^\top D)^{-1} (\widetilde{B}^\top X + D^\top C) + \frac{1}{\gamma} C^\top C \right) \xi = 0.
		\]
		
		Further, \eqref{are} is the Schur-complement of
		
		\[
		M_{sc} :=
		\begin{bmatrix}
			\widetilde{A}^\top X + X\widetilde{A} +\frac{1}{\gamma}C^\top C
			&
			\widetilde{B}^\top X + D^\top C
			\\
			X\widetilde{B} + C^\top D
			&
			\frac{1}{\gamma}D^\top D - \gamma I_Z
		\end{bmatrix}.
		\]
		
		Hence for $\eta = -\hat{D}\hat{B}^\top \xi$ and $\forall \xi$, we have that 
		
		\[
		\begin{bmatrix}
			\xi & \eta
		\end{bmatrix}
		M_{sc}
		\begin{bmatrix}
			\xi \\ \eta
		\end{bmatrix} = 0.
		\]
		
		For a general vector $\left[ \xi \quad \varrho \right]^{\top}$, we compute
		
		\[
		\begin{bmatrix}
			\xi \\ \varrho
		\end{bmatrix}
		=
		\begin{bmatrix}
			\xi \\ \eta
		\end{bmatrix}
		+
		\begin{bmatrix}
			0 \\ \varrho - \eta
		\end{bmatrix},
		\]
		
		and then
		
		\[
		\begin{bmatrix}
			\xi & \varrho
		\end{bmatrix}
		M_{sc}
		\begin{bmatrix}
			\xi \\ \varrho
		\end{bmatrix}
		=
		0
		+
		\begin{bmatrix}
			0 & \varrho - \eta
		\end{bmatrix}
		M_{sc}
		\begin{bmatrix}
			0 \\ \varrho - \eta
		\end{bmatrix}
		=
		\begin{bmatrix}
			\varrho - \eta
		\end{bmatrix}^\top
		\begin{bmatrix}
			\frac{1}{\gamma}D^\top D - \gamma I_Z
		\end{bmatrix}
		\begin{bmatrix}
			\varrho - \eta
		\end{bmatrix}
		<
		0
		\]
		for $\gamma$ sufficiently large.
	\end{proof}
	
	
	\bibliographystyle{abbrv}
	\bibliography{biblioCS2}

\begin{thebibliography}{10}

\bibitem{bellomo20review}
G.~Albi, N.~Bellomo, L.~Fermo, S.-Y. Ha, J.~Kim, L.~Pareschi, D.~Poyato, and
  J.~Soler.
\newblock Vehicular traffic, crowds, and swarms: from kinetic theory and
  multiscale methods to applications and research perspectives.
\newblock {\em Math. Models Methods Appl. Sci.}, 29(10):1901--2005, 2019.

\bibitem{albi2021gradient}
G.~Albi, S.~Bicego, and D.~Kalise.
\newblock Gradient-augmented supervised learning of optimal feedback laws using
  state-dependent {R}iccati equations.
\newblock {\em IEEE Control Systems Letters}, 6:836--841, 2021.

\bibitem{albi2021momentdriven}
G.~Albi, M.~Herty, D.~Kalise, and C.~Segala.
\newblock Moment-driven predictive control of mean-field collective dynamics.
\newblock {\em SIAM Journal on Control and Optimization}, 60(2):814--841, 2022.

\bibitem{article_Albi_2015}
G.~Albi, L.~Pareschi, and M.~Zanella.
\newblock Uncertainty quantification in control problems for flocking models.
\newblock {\em Mathematical Problems in Engineering}, 2015, 04 2015.

\bibitem{ballerini2008empirical}
M.~Ballerini, N.~Cabibbo, R.~Candelier, A.~Cavagna, E.~Cisbani, I.~Giardina,
  A.~Orlandi, G.~Parisi, A.~Procaccini, M.~Viale, et~al.
\newblock Empirical investigation of starling flocks: a benchmark study in
  collective animal behaviour.
\newblock {\em Animal behaviour}, 76(1):201--215, 2008.

\bibitem{bacsar2008h}
T.~Ba{\c{s}}ar and P.~Bernhard.
\newblock {\em H-infinity optimal control and related minimax design problems:
  a dynamic game approach}.
\newblock Springer Science \& Business Media, 2008.

\bibitem{MR2974186}
N.~Bellomo and J.~Soler.
\newblock On the mathematical theory of the dynamics of swarms viewed as
  complex systems.
\newblock {\em Math. Models Methods Appl. Sci.}, 22(suppl. 1):1140006, 29,
  2012.

\bibitem{bongini2017inferring}
M.~Bongini, M.~Fornasier, M.~Hansen, and M.~Maggioni.
\newblock Inferring interaction rules from observations of evolutive systems i:
  The variational approach.
\newblock {\em Mathematical Models and Methods in Applied Sciences},
  27(05):909--951, 2017.

\bibitem{boyd1994linear}
S.~Boyd, L.~El~Ghaoui, E.~Feron, and V.~Balakrishnan.
\newblock {\em Linear matrix inequalities in system and control theory}.
\newblock SIAM, 1994.

\bibitem{carrillo2014derivation}
J.~A. Carrillo, Y.-P. Choi, and M.~Hauray.
\newblock The derivation of swarming models: mean-field limit and {W}asserstein
  distances.
\newblock In {\em Collective dynamics from bacteria to crowds}, pages 1--46.
  Springer, 2014.

\bibitem{carrillo2009double}
J.~A. Carrillo, M.~R. D'Orsogna, and V.~Panferov.
\newblock Double milling in self-propelled swarms from kinetic theory.
\newblock {\em Kinet. Relat. Models}, 2(2):363--378, 2009.

\bibitem{carrillo2010particle}
J.~A. Carrillo, M.~Fornasier, G.~Toscani, and F.~Vecil.
\newblock Particle, kinetic, and hydrodynamic models of swarming.
\newblock In {\em Mathematical modeling of collective behavior in
  socio-economic and life sciences}, pages 297--336. Springer, 2010.

\bibitem{CiCP-25-508}
J.~A. Carrillo, L.~Pareschi, and M.~Zanella.
\newblock Particle based g{PC} methods for mean-field models of swarming with
  uncertainty.
\newblock {\em Communications in Computational Physics}, 25(2):508--531, 2018.

\bibitem{CKPP19}
Y.-P. Choi, D.~Kalise, J.~Peszek, and A.~A. Peters.
\newblock A collisionless singular {C}ucker-{S}male model with decentralized
  formation control.
\newblock {\em SIAM J. Appl. Dyn. Syst.}, 18(4):1954--1981, 2019.

\bibitem{MR2165531}
S.~Cordier, L.~Pareschi, and G.~Toscani.
\newblock On a kinetic model for a simple market economy.
\newblock {\em J. Stat. Phys.}, 120(1-2):253--277, 2005.

\bibitem{MR3308728}
E.~Cristiani, B.~Piccoli, and A.~Tosin.
\newblock {\em Multiscale modeling of pedestrian dynamics}, volume~12 of {\em
  MS\&A. Model. Simul. Appl.}
\newblock Springer, Cham, 2014.

\bibitem{cucker2007emergent}
F.~Cucker and S.~Smale.
\newblock Emergent behavior in flocks.
\newblock {\em IEEE Trans. Automat. Control}, 52(5):852--862, 2007.

\bibitem{degond2007network}
P.~Degond, S.~G{\"o}ttlich, M.~Herty, and A.~Klar.
\newblock A network model for supply chains with multiple policies.
\newblock {\em Multiscale Model. Simul.}, 6(3):820--837, 2007.

\bibitem{dimarco2017uncertainty}
G.~Dimarco, L.~Pareschi, and M.~Zanella.
\newblock Uncertainty quantification for kinetic models in socio--economic and
  life sciences.
\newblock In {\em Uncertainty quantification for hyperbolic and kinetic
  equations}, pages 151--191. Springer, 2017.

\bibitem{duan2013lmis}
G.-R. Duan and H.-H. Yu.
\newblock {\em LMIs in control systems: analysis, design and applications}.
\newblock CRC press, 2013.

\bibitem{dullerud2013course}
G.~E. Dullerud and F.~Paganini.
\newblock {\em A course in robust control theory: a convex approach},
  volume~36.
\newblock Springer Science \& Business Media, 2013.

\bibitem{dyer2009leadership}
J.~R. Dyer, A.~Johansson, D.~Helbing, I.~D. Couzin, and J.~Krause.
\newblock Leadership, consensus decision making and collective behaviour in
  humans.
\newblock {\em Philos. Trans. Roy. Soc. B}, 364(1518):781--789, 2009.

\bibitem{d2006self}
M.~R. D’Orsogna, Y.-L. Chuang, A.~L. Bertozzi, and L.~S. Chayes.
\newblock Self-propelled particles with soft-core interactions: patterns,
  stability, and collapse.
\newblock {\em Phys. Rev. Lett.}, 96(10):104302, 2006.

\bibitem{Giselle}
G.~Estrada-Rodriguez and H.~Gimperlein.
\newblock Interacting particles with {L}\'{e}vy strategies: limits of transport
  equations for swarm robotic systems.
\newblock {\em SIAM J. Appl. Math.}, 80(1):476--498, 2020.

\bibitem{franklin2002feedback}
G.~F. Franklin, J.~D. Powell, A.~Emami-Naeini, and J.~D. Powell.
\newblock {\em Feedback control of dynamic systems}, volume~4.
\newblock Prentice hall Upper Saddle River, 2002.

\bibitem{Meurer}
G.~Freudenthaler and T.~Meurer.
\newblock P{DE}-based multi-agent formation control using flatness and
  backstepping: analysis, design and robot experiments.
\newblock {\em Automatica}, 115:108897, 13, 2020.

\bibitem{Garnier}
J.~Garnier, G.~Papanicolaou, and T.-W. Yang.
\newblock Consensus convergence with stochastic effects.
\newblock {\em Vietnam J. Math.}, 45(1-2):51--75, 2017.

\bibitem{goddard2022noisy}
B.~D. Goddard, B.~Gooding, H.~Short, and G.~Pavliotis.
\newblock Noisy bounded confidence models for opinion dynamics: the effect of
  boundary conditions on phase transitions.
\newblock {\em IMA Journal of Applied Mathematics}, 87(1):80--110, 2022.

\bibitem{MR2887663}
J.~G\'{o}mez-Serrano, C.~Graham, and J.-Y. Le~Boudec.
\newblock The bounded confidence model of opinion dynamics.
\newblock {\em Math. Models Methods Appl. Sci.}, 22(2):1150007, 46, 2012.

\bibitem{MR2425606}
S.-Y. Ha and E.~Tadmor.
\newblock From particle to kinetic and hydrodynamic descriptions of flocking.
\newblock {\em Kinet. Relat. Models}, 1(3):415--435, 2008.

\bibitem{han2017resolving}
Y.~Han, A.~Hegyi, Y.~Yuan, S.~Hoogendoorn, M.~Papageorgiou, and C.~Roncoli.
\newblock Resolving freeway jam waves by discrete first-order model-based
  predictive control of variable speed limits.
\newblock {\em Transportation Research Part C: Emerging Technologies},
  77:405--420, 2017.

\bibitem{MR2580958}
M.~Herty and L.~Pareschi.
\newblock Fokker-{P}lanck asymptotics for traffic flow models.
\newblock {\em Kinet. Relat. Models}, 3(1):165--179, 2010.

\bibitem{HePaSt15}
M.~Herty, L.~Pareschi, and S.~Steffensen.
\newblock Mean-field control and {R}iccati equations.
\newblock {\em Netw. Heterog. Media}, 10(3):699--715, 2015.

\bibitem{MR2844776}
M.~Herty and C.~Ringhofer.
\newblock Feedback controls for continuous priority models in supply chain
  management.
\newblock {\em Comput. Methods Appl. Math.}, 11(2):206--213, 2011.

\bibitem{hu2017uncertainty}
J.~Hu and S.~Jin.
\newblock Uncertainty quantification for kinetic equations.
\newblock In {\em Uncertainty quantification for hyperbolic and kinetic
  equations}, pages 193--229. Springer, 2017.

\bibitem{hu2015stochastic}
J.~Hu, S.~Jin, and D.~Xiu.
\newblock A stochastic galerkin method for {H}amilton--{J}acobi equations with
  uncertainty.
\newblock {\em SIAM Journal on Scientific Computing}, 37(5):A2246--A2269, 2015.

\bibitem{katz2011inferring}
Y.~Katz, K.~Tunstr{\o}m, C.~C. Ioannou, C.~Huepe, and I.~D. Couzin.
\newblock Inferring the structure and dynamics of interactions in schooling
  fish.
\newblock {\em Proceedings of the National Academy of Sciences},
  108(46):18720--18725, 2011.

\bibitem{khalil1996robust}
I.~Khalil, J.~Doyle, and K.~Glover.
\newblock {\em Robust and optimal control}.
\newblock prentice hall, new jersey, 1996.

\bibitem{le2010spectral}
O.~Le~Ma{\^\i}tre and O.~M. Knio.
\newblock {\em Spectral methods for uncertainty quantification: with
  applications to computational fluid dynamics}.
\newblock Springer Science \& Business Media, 2010.

\bibitem{lin2010robust}
P.~Lin and Y.~Jia.
\newblock Robust {H}-infinity consensus analysis of a class of second-order
  multi-agent systems with uncertainty.
\newblock {\em IET control theory \& applications}, 4(3):487--498, 2010.

\bibitem{liu2019robust}
J.~Liu, Y.~Zhang, H.~Liu, Y.~Yu, and C.~Sun.
\newblock Robust event-triggered control of second-order disturbed
  leader-follower mass: A nonsingular finite-time consensus approach.
\newblock {\em International Journal of Robust and Nonlinear Control},
  29(13):4298--4314, 2019.

\bibitem{liujia2011robust}
Y.~Liu and Y.~Jia.
\newblock Robust {H}-infinity consensus control of uncertain multi-agent
  systems with time delays.
\newblock {\em International Journal of Control, Automation and Systems}, 9, 12
  2011.

\bibitem{luo2021event}
Y.~Luo and W.~Zhu.
\newblock Event-triggered h-infinity finite-time consensus control for
  nonlinear second-order multi-agent systems with disturbances.
\newblock {\em Advances in Difference Equations}, 2021(1):1--19, 2021.

\bibitem{mo2013finite}
L.~P. Mo, H.~Y. Zhang, and H.~Y. Hu.
\newblock Finite-time {H}-infinity consensus of multi-agent systems with a
  leader.
\newblock In {\em Applied Mechanics and Materials}, volume 241, pages
  1608--1613. Trans Tech Publ, 2013.

\bibitem{motsch2014heterophilious}
S.~Motsch and E.~Tadmor.
\newblock Heterophilious dynamics enhances consensus.
\newblock {\em SIAM review}, 56(4):577--621, 2014.

\bibitem{KPAsurvey15}
K.-K. Oh, M.-C. Park, and H.-S. Ahn.
\newblock A survey of multi-agent formation control.
\newblock {\em Automatica}, 53:424--440, 2015.

\bibitem{mmpeet2020}
M.~M. Peet.
\newblock Lecture notes in {LMI} methods in optimal and robust control, 2020.

\bibitem{MR3157726}
A.~A. Peters, R.~H. Middleton, and O.~Mason.
\newblock Leader tracking in homogeneous vehicle platoons with broadcast
  delays.
\newblock {\em Automatica}, 50(1):64--74, 2014.

\bibitem{stern2018dissipation}
R.~E. Stern, S.~Cui, M.~L. Delle~Monache, R.~Bhadani, M.~Bunting, M.~Churchill,
  N.~Hamilton, H.~Pohlmann, F.~Wu, B.~Piccoli, et~al.
\newblock Dissipation of stop-and-go waves via control of autonomous vehicles:
  Field experiments.
\newblock {\em Transp. Research Part C: Emerging Techn.}, 89:205--221, 2018.

\bibitem{MR2247927}
G.~Toscani.
\newblock Kinetic models of opinion formation.
\newblock {\em Commun. Math. Sci.}, 4(3):481--496, 2006.

\bibitem{MR3948232}
A.~Tosin and M.~Zanella.
\newblock Kinetic-controlled hydrodynamics for traffic models with
  driver-assist vehicles.
\newblock {\em Multiscale Model. Simul.}, 17(2):716--749, 2019.

\bibitem{willems1971least}
J.~Willems.
\newblock Least squares stationary optimal control and the algebraic {R}iccati
  equation.
\newblock {\em IEEE Transactions on automatic control}, 16(6):621--634, 1971.

\bibitem{xiu2010numerical}
D.~Xiu.
\newblock {\em Numerical methods for stochastic computations}.
\newblock Princeton university press, 2010.

\bibitem{yakubovich1964solution}
V.~A. Yakubovich.
\newblock Solution of certain matrix inequalities encountered in non-linear
  control theory.
\newblock In {\em Doklady Akademii Nauk}, volume 156, pages 278--281. Russian
  Academy of Sciences, 1964.

\bibitem{yakubovich1967method}
V.~A. Yakubovich.
\newblock The method of matrix inequalities in the stability theory of
  nonlinear control systems, i, ii, iii.
\newblock {\em Automation and remote control}, 25(4):905--917, 1967.

\end{thebibliography}

	\vspace{1cm}
	\textit{E-mail address: }\href{mailto:giacomo.albi@univr.it}{giacomo.albi@univr.it} \\
	\textit{E-mail address: }\href{mailto:herty@igpm.rwth-aachen.de}{herty@igpm.rwth-aachen.de} \\
	\textit{E-mail address: }\href{mailto:segala@igpm.rwth-aachen.de}{segala@igpm.rwth-aachen.de}
	
\end{document}